\documentclass[letterpaper]{amsart}
\usepackage{amssymb,xspace,cmap}

\newtheorem*{thma}{Theorem~A}
\newtheorem*{cora}{Corollary~A}
\newtheorem*{corb}{Corollary~B}
\newtheorem{thm}{Theorem}[section]
\newtheorem{cor}[thm]{Corollary}
\newtheorem{fact}[thm]{Fact}
\newtheorem{lemma}[thm]{Lemma}
\newtheorem{claim}{Claim}[thm]

\theoremstyle{definition}
\newtheorem{defn}[thm]{Definition}

\theoremstyle{remark}
\newtheorem{remark}{Remark}

\newcommand*\axiomfont[1]{\textsf{\textup{#1}}\xspace}
\newcommand\ch{\axiomfont{CH}}
\newcommand\MA{\axiomfont{MA}_{\omega_1}}
\newcommand\MM{\axiomfont{MM}}
\newcommand\PFA{\axiomfont{PFA}}
\newcommand\ZFC{\axiomfont{ZFC}}
\newcommand\ZFCm{\axiomfont{ZFC}^-}
\newcommand\SCFA{\axiomfont{SCFA}}
\newcommand{\V}{\ensuremath{\mathrm{V}}}
\newcommand{\seq}[2]{{\langle#1\;|\;}\linebreak[0]{#2\rangle}} 
\newcommand{\kla}[1]{ {\langle #1 \rangle} }

\DeclareMathOperator{\cf}{cf}
\DeclareMathOperator{\tr}{Tr}
\DeclareMathOperator{\ran}{Im}
\DeclareMathOperator{\otp}{otp}
\DeclareMathOperator{\dom}{dom}
\DeclareMathOperator{\add}{Add}
\DeclareMathOperator{\acc}{acc}
\DeclareMathOperator{\refl}{Refl}

\author{Gunter Fuchs}
\address{The College of Staten Island (CUNY), 2800 Victory Blvd., Staten Island, NY 10314\\
The Graduate Center (CUNY), 365 5th Avenue, New York, NY 10016}
\urladdr{http://www.math.csi.cuny.edu/\textasciitilde fuchs}

\author{Assaf Rinot}
\address{Department of Mathematics, Bar-Ilan University, Ramat-Gan 5290002, Israel.}
\urladdr{http://www.assafrinot.com}

\thanks{The first author was partially supported by PSC-CUNY grant 69656-00 47.
The second author was partially supported by the Israel Science Foundation (grant $\#$1630/14).\\
\indent The final version of this article is to appear in \emph{Acta Mathematica Hungarica.}}
\subjclass[2010]{Primary 03E35; Secondary 03E57, 03E05}
\keywords{Weak square, Simultaneous stationary reflection, SCFA}

\begin{document}
\title{Weak square and stationary reflection}
\begin{abstract}It is well-known that the square principle $\square_\lambda$ entails the existence of a non-reflecting stationary subset of $\lambda^+$,
whereas the weak square principle $\square^*_\lambda$ does not.
Here we show that if $\mu^{\cf(\lambda)}<\lambda$ for all $\mu<\lambda$,
then $\square^*_\lambda$ entails the existence of a non-reflecting stationary subset of $E^{\lambda^+}_{\cf(\lambda)}$
in the forcing extension for adding a single Cohen subset of $\lambda^+$.

It follows that indestructible forms of simultaneous stationary reflection entail the failure of weak square.
We demonstrate this by settling a question concerning the subcomplete forcing axiom (\SCFA),
proving that \SCFA entails the failure of $\square^*_\lambda$ for every singular cardinal $\lambda$ of countable cofinality.
\end{abstract}
\date{\today}

\maketitle

\section{Introduction}
Write $E^\mu_\chi:=\{\alpha<\mu\mid \cf(\alpha)=\chi\}$, and likewise $E^\mu_{>\chi}:=\{\alpha<\mu\mid \cf(\alpha)>\chi\}$.
\begin{defn}[Trace] For a regular uncountable cardinal $\mu$ and a stationary subset $S\subseteq\mu$,
let $\tr(S):=\{\alpha\in E^\mu_{>\omega}\mid S\cap\alpha\text{ is stationary in }\alpha\}$.
\end{defn}

If $\tr(S)$ is nonempty, then we say that $S$ \emph{reflects}. Otherwise, we say that $S$ is \emph{non-reflecting}.

\begin{defn}[Simultaneous reflection]
The principle $\refl(\theta,S)$ asserts that for every sequence $\langle S_i \mid i<\theta\rangle$ of stationary subsets of $S$,
the set $\bigcap_{i<\theta}\tr(S_i)$ is nonempty.
\end{defn}

\begin{defn}[Square] For an infinite cardinal $\lambda$ and a nonzero cardinal $\mu\le\lambda$,
 $\square_{\lambda,\mu}$ asserts the existence of a sequence $\langle C_\delta\mid\delta<\lambda^+\rangle$ such that:
\begin{enumerate}
\item for all $\delta\in\acc(\lambda^+)$, $C_\delta$ is a club in $\delta$ of order-type $\le\lambda$;
\item $|\{ C_\delta\cap\gamma\mid \delta<\lambda^+\ \&\ \sup(C_\delta\cap\gamma)=\gamma\}|\le\mu$ for all $\gamma<\lambda^+$.
\end{enumerate}
\end{defn}
\begin{remark} Here, $\acc(x):=\{\delta\in x\mid \sup(x\cap\delta)=\delta>0\}$.
\end{remark}

The strongest principle $\square_{\lambda,1}$ is denoted by $\square_\lambda$,
and the weakest principle $\square_{\lambda,\lambda}$ is denoted by $\square^*_\lambda$.
By Fodor's lemma, $\square_\lambda$ entails the failure of $\refl(S,1)$ for every stationary $S\subseteq\lambda^+$.
More generally:
\begin{fact}[Cummings and Magidor, {\cite[Lemma 2.1, 2.2]{MR2811288}}]\label{squareNreflection} For all infinite cardinals $\mu<\lambda$:
\begin{enumerate}
\item $\square_{\lambda,\mu}$ entails the failure of $\refl(\cf(\lambda),S)$ for every stationary $S\subseteq\lambda^+$;
\item If $\mu<\cf(\lambda)$, then $\square_{\lambda,\mu}$ entails the failure of $\refl(1,S)$ for every stationary $S\subseteq\lambda^+$.
\end{enumerate}
\end{fact}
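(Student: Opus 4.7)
The plan is to extract both assertions by pigeonholing on $\otp(C_\delta)$ for a fixed $\square_{\lambda,\mu}$-sequence $\vec C=\langle C_\delta\mid\delta<\lambda^+\rangle$, and then applying a pressing-down step that exploits the bound $|\mathcal D_\gamma|\le\mu$, where $\mathcal D_\gamma:=\{C_\delta\cap\gamma\mid \delta<\lambda^+,\ \sup(C_\delta\cap\gamma)=\gamma\}$. The relevant mechanism is that for any $\alpha\in\acc(\lambda^+)$ and any $\delta\in\acc(C_\alpha)$, the initial segment $C_\alpha\cap\delta$ lies in $\mathcal D_\delta$, so the assignment $\delta\mapsto C_\alpha\cap\delta$ is a choice function picking one out of $\le\mu$ possibilities at each $\delta$.

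For part (2), the assumption $\mu<\cf(\lambda)$ enters as follows. Given stationary $S\subseteq\lambda^+$, I first shrink $S$ to a subset concentrating on a single regular cofinality $\chi$ with $\mu\le\chi<\lambda$, and then pigeonhole $\delta\mapsto\otp(C_\delta)$ on $S$ to obtain a stationary $S^*\subseteq S$ on which $\otp(C_\delta)$ is constantly some $\eta$. Suppose $\alpha\in\tr(S^*)$; then $\acc(C_\alpha)\cap S^*$ is stationary in $\alpha$ and $\cf(\alpha)>\chi\ge\mu$. Indexing $C_\alpha\cap\delta$ inside the $\mathcal D_\delta$ (a set of size at most $\mu<\cf(\alpha)$) and pressing down stabilizes this index, effectively reducing the configuration to a coherent one. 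The classical observation that $\delta\mapsto\otp(C_\alpha\cap\delta)$ is strictly increasing and continuous, with values tied by coherence to the constant $\otp(C_\delta)=\eta$, then produces the desired contradiction.

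For part (1), no such cofinality gap is assumed, so instead I produce $\cf(\lambda)$ witnesses. Fix a continuous increasing cofinal sequence $\langle\lambda_i\mid i<\cf(\lambda)\rangle$ in $\lambda$. For each $i$, I would construct a stationary $S_i\subseteq S$ by pressing down the $\lambda_i$-th element of $C_\delta$ on a stationary subset of $S$ meeting $\{\delta\mid\otp(C_\delta)\ge\lambda_i\}$, so that every $\delta\in S_i$ carries a fixed initial segment of $C_\delta$ of length $\lambda_i$. A common reflection point $\alpha\in\bigcap_{i<\cf(\lambda)}\tr(S_i)$ would then force $\otp(C_\alpha)\ge\lambda_i$ for every $i$, hence $\otp(C_\alpha)=\lambda$ and $\cf(\alpha)=\cf(\lambda)$; a further analysis of how the pinned initial segments cohere along $\acc(C_\alpha)$, again exploiting $|\mathcal D_\delta|\le\mu$, yields the contradiction.

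The principal obstacle is that, unlike in the $\square_\lambda$ case, $C_\delta$ need not coincide with $C_\alpha\cap\delta$ at $\delta\in\acc(C_\alpha)$; one must genuinely invoke $|\mathcal D_\delta|\le\mu$ through an intermediate pressing-down step to recover enough coherence for the Fodor-style argument. Ensuring that $\cf(\alpha)>\mu$ at any putative reflection point is the delicate quantifier juggle, and is precisely what dictates the asymmetric form of (2) versus (1): part (2) leverages $\mu<\cf(\lambda)$ to force this cofinality gap, whereas part (1) trades that strengthening for the looser bookkeeping inherent in producing $\cf(\lambda)$ simultaneous witnesses.
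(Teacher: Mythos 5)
Note first that the paper does not prove this Fact at all: it is quoted from Cummings and Magidor, so your attempt can only be measured against a correct argument, and as it stands it has genuine gaps at the decisive steps of both parts. The central one is the endgame. You correctly isolate the obstacle that for $\mu>1$ nothing forces $C_\alpha\cap\delta=C_\delta$ at $\delta\in\acc(C_\alpha)$, but your proposed repair --- stabilizing ``the index'' of $C_\alpha\cap\delta$ inside $\mathcal D_\delta$ by pressing down below $\alpha$ --- does not recover any coherence. The families $\mathcal D_\delta$ for distinct $\delta$ are unrelated sets of size $\le\mu$, so equal positions (under whatever enumerations you fix) relate nothing across different $\delta$'s, and in particular nothing ties $\otp(C_\alpha\cap\delta)$ to the stabilized value $\otp(C_\delta)=\eta$: membership in $S^*$ constrains the distinguished club $C_\delta$, while the trace $C_\alpha\cap\delta$ is merely \emph{some} element of $\mathcal D_\delta$. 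The map $\delta\mapsto\otp(C_\alpha\cap\delta)$ is strictly increasing on $\acc(C_\alpha)$, hence can never be stabilized, and after your pressing-down you are left exactly where you started: a strictly increasing sequence of order types, each known only to lie in a $\le\mu$-sized set depending on $\delta$. No contradiction follows; the phrase ``effectively reducing the configuration to a coherent one'' is precisely the missing proof. The same gap reappears in part (1) when you claim a common reflection point $\alpha$ would satisfy $\otp(C_\alpha)\ge\lambda_i$ for all $i$: pinning initial segments of the various $C_\delta$'s says nothing about $C_\alpha$, since $\delta\in S_i\cap\acc(C_\alpha)$ only yields $C_\alpha\cap\delta\in\mathcal D_\delta$.

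The set-up stages also fail for the inputs that actually matter. In (2), the opening shrink of $S$ to a cofinality $\chi$ with $\mu\le\chi<\lambda$ is impossible in general once $\mu>\omega$: the Fact quantifies over \emph{every} stationary $S\subseteq\lambda^+$, and $S$ may be contained in $E^{\lambda^+}_\omega$ while, say, $\mu=\omega_1<\cf(\lambda)=\omega_2$; then a reflection point $\alpha$ need only have $\cf(\alpha)>\omega$, possibly $\cf(\alpha)=\mu$, and your $\le\mu$-fold splitting of a stationary subset of $\alpha$ is unavailable. The hypothesis $\mu<\cf(\lambda)$ should instead be spent in $\V$, before any reflection point is chosen --- for instance, it guarantees that $\sup\{\otp(E)\mid E\in\mathcal D_\delta\}<\lambda$ for every $\delta$ all of whose traces have order type $<\lambda$, so that data about \emph{all} traces through $\delta$ (not just the chosen $C_\delta$) can be uniformized on a stationary set by $\lambda^+$-completeness of the club filter --- rather than as a cofinality gap at $\alpha$. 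Similarly, in (1), the sets $\{\delta\in S\mid\otp(C_\delta)\ge\lambda_i\}$ need not be stationary or even nonempty: nothing in the definition of $\square_{\lambda,\mu}$ prevents $\otp(C_\delta)=\omega$ for every $\delta\in S\subseteq E^{\lambda^+}_\omega$ --- exactly the kind of $S$ this paper applies the Fact to, with $\cf(\lambda)=\omega$ --- in which case your $S_i$ are empty for all $\lambda_i>\omega$ and the construction never starts. So while the ingredients you name (order-type pigeonholing, the bound $|\mathcal D_\gamma|\le\mu$, Fodor-style arguments) are the right raw material, both the initial constructions and the final contradiction require ideas the proposal does not supply.
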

In contrast, by Theorem~12.1 of \cite{MR1838355}, $\square^*_{\aleph_\omega}$ is compatible with $\refl(\theta,S)$ holding for every $\theta<\aleph_\omega$ and every stationary $S\subseteq\aleph_{\omega+1}$.
The purpose of this note is to show that while $\square^*_{\lambda}$ does not imply the failure of $\refl(1,E^{\lambda^+}_{\cf(\lambda)})$,
it does imply it in some forcing extension. To be more precise:

\begin{thma} Suppose $\lambda$ is a singular cardinal,
and $\mu^{\cf(\lambda)}<\lambda$ for all $\mu<\lambda$.

If $\square^*_\lambda$ holds, then in the generic extension by $\add(\lambda^+,1)$, there exists a non-reflecting stationary subset of $E^{\lambda^+}_{\cf(\lambda)}$.
\end{thma}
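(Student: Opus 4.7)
\medskip
\noindent\textbf{Proof plan.}
Let $\kappa:=\cf(\lambda)$. My plan begins with the observation that $\add(\lambda^+,1)$ is $\lambda^+$-closed, hence adds no new bounded subsets of $\lambda^+$. In particular, if $T\subseteq\lambda^+$ is defined in $\V[G]$, then for every $\gamma<\lambda^+$ the trace $T\cap\gamma$ already lies in $\V$, and its stationarity in $\gamma$ is absolute between $\V$ and $\V[G]$. Consequently, to witness $T\subseteq E^{\lambda^+}_\kappa$ non-reflecting in $\V[G]$ it is enough to exhibit, for each $\gamma\in E^{\lambda^+}_{>\kappa}$, a club $E_\gamma\subseteq\gamma$ \emph{in $\V$} disjoint from $T\cap\gamma$, while verifying stationarity of $T$ in $\lambda^+$ by the usual density-plus-closure argument in the forcing.

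The next step is to convert $\square^*_\lambda$ into a coherent scaffold. Fix $\vec C=\langle C_\delta\mid\delta<\lambda^+\rangle$ witnessing $\square^*_\lambda$. For each $\gamma<\lambda^+$, the family $\mathcal K_\gamma:=\{C_\delta\cap\gamma\mid\sup(C_\delta\cap\gamma)=\gamma\}$ has cardinality at most $\lambda$, consists of clubs in $\gamma$ of order-type at most $\lambda$, and is coherent in the sense that $K\in\mathcal K_\gamma$ with $\gamma'\in\acc(K)$ gives $K\cap\gamma'\in\mathcal K_{\gamma'}$. Together with the hypothesis $\mu^\kappa<\lambda$ for $\mu<\lambda$, this scaffold can be enumerated into a single sequence $\vec b=\langle b_\beta\mid\beta<\lambda^+\rangle$ with $|b_\beta|\le\lambda$ witnessing $E^{\lambda^+}_\kappa\in I[\lambda^+]$: on a club of $\alpha\in E^{\lambda^+}_\kappa$ there is a cofinal $c_\alpha\subseteq\alpha$ of order-type $\kappa$ whose proper initial segments all occur among $\{b_\beta\mid\beta<\alpha\}$, drawn from the coherent $\vec C$-system.

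Given the generic $g:\lambda^+\to 2$, I would define $T$ as a sparsification of the naive generic set by putting $\alpha\in T$ iff $\alpha$ is approachable through $\vec b$ with witness $c_\alpha$, $g(\alpha)=1$, and $g(\beta)=0$ for every $\beta<\alpha$ such that $b_\beta$ is a proper initial segment of $c_\alpha$. Stationarity of $T$ follows by a standard closure/density argument: given a condition $p$ and a club-name $\dot D$, one runs a $\kappa$-length descent below $p$, stamping in the required zeros and funneling toward an approachable $\alpha\in\dot D\cap E^{\lambda^+}_\kappa$, then setting $g(\alpha)=1$. For non-reflection at $\gamma\in E^{\lambda^+}_{>\kappa}$, since $T\cap\gamma\in\V$ and the zero-pattern on $g\restriction\gamma$ is also in $\V$, one uses coherence of $\mathcal K_\gamma$ to locate some $K\in\mathcal K_\gamma$ with $\acc(K)\cap T=\emptyset$: each would-be $\alpha\in\acc(K)\cap E^\gamma_\kappa\cap T$ would force an initial segment of $c_\alpha$ to be recorded at a $b_\beta$ already pinned by $g$ to the ``wrong'' value, a contradiction that lets us take $E_\gamma:=\acc(K)$.

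The main obstacle is calibrating the zero-pattern in the definition of $T$: it must be sparse enough to survive the density argument (preserving stationarity) yet rigid enough to trap $T\cap\gamma$ inside the complement of some $K\in\mathcal K_\gamma$ (yielding non-reflection). The hypothesis $\mu^\kappa<\lambda$ enters precisely here, by ensuring the $\lambda$-sized bookkeeping $\vec b\restriction\alpha$ can accommodate all $\kappa$-long initial approximations to $c_\alpha$; without it, the $\square^*_\lambda$-scaffold presents too many candidate initial segments to encode compatibly via a single Cohen generic, and the two requirements cannot be reconciled.
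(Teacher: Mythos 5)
You take a genuinely different route from the paper --- a direct construction of the non-reflecting set from the Cohen generic --- but the route has a gap at its center, and it is exactly the point you flag in your last paragraph as ``the main obstacle'': the calibration of the zero-pattern is not an implementation detail but the mathematical crux, and as described it fails. Concretely, the non-reflection step does not go through. Your $T$ is carved out of $g$ by \emph{local} constraints: $g(\alpha)=1$ together with $g(\beta)=0$ at the coding indices $\beta$ of the pre-fixed witness $c_\alpha$. Since conditions in $\add(\lambda^+,1)$ may have domain of size $\lambda$, for any fixed $\gamma\in E^{\lambda^+}_{>\kappa}$ a \emph{single} condition can decide $g\restriction\gamma$ outright, and nothing prevents such a condition from placing a stationary-in-$\gamma$ set $S_0$ of approachable points into $T$: put $g=1$ on $S_0$ and $g=0$ everywhere else below $\gamma$, after thinning $S_0$ so that no member of $S_0$ is a coding index for another. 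Below such a condition, no extension can remove these points from $T$, so ``$T\cap\gamma$ is nonstationary in $\gamma$'' is not a dense requirement, and genericity cannot deliver non-reflection at every $\gamma$ simultaneously. The purported contradiction via coherence of $\mathcal K_\gamma$ never materializes: the witnesses $c_\alpha$ and the bookkeeping $\vec b$ are fixed in $\V$ with no relation to the particular $K\in\mathcal K_\gamma$, so two points $\alpha<\alpha'$ of $\acc(K)\cap T$ conflict only if $b_\alpha$ happens to be a proper initial segment of $c_{\alpha'}$, which nothing enforces. There is a secondary gap in the stationarity argument as well: the coding indices of $c_\alpha$ are revealed only at the limit of your $\kappa$-descent, by which time the descent --- in particular the uncontrollable extensions needed to decide elements of the club name $\dot D$ --- may already have committed $g$ to the value $1$ at some of them, so you cannot in general complete the condition to force $\alpha\in T$.

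The paper's proof avoids all of this by never defining the non-reflecting set from the generic. It argues indirectly: by a result of Rinot, under $\square^*_\lambda$, if every stationary subset of $E^{\lambda^+}_{\cf(\lambda)}$ reflects then $\clubsuit^*(\lambda^+)$ holds; under the cardinal arithmetic hypothesis together with $\ch_\lambda$ (which the Cohen forcing itself supplies), $\clubsuit^*$ upgrades to $\diamondsuit^*(E^{\lambda^+}_{\cf(\lambda)})$ (Lemma~\ref{15}); and a Devlin-style splitting argument (Lemma~\ref{l14}) shows that one further Cohen subset of $\lambda^+$ destroys any ground-model $\diamondsuit^*(E^{\lambda^+}_\theta)$-sequence for $\theta=\log_\lambda(\lambda^+)$. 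Forcing with $\add(\lambda^+,\lambda^{++})$, every candidate $\diamondsuit^*$-sequence appears at an intermediate stage and is killed at the next, so the full extension has a non-reflecting stationary subset of $E^{\lambda^+}_{\cf(\lambda)}$; that set already lives in an intermediate extension by a forcing isomorphic to $\add(\lambda^+,1)$ (with $\lambda^+$-closure of the tail ensuring stationarity and non-reflection pass down, much as in your absoluteness observations, which are correct), and homogeneity of $\add(\lambda^+,1)$ transfers the conclusion to every condition. The very shape of this argument --- existence extracted from a counting contradiction plus a homogeneity transfer, rather than a definition of the set from $g$ --- is evidence that the tension you identify cannot be resolved by a local coding scheme of the kind you propose; your plan would need a fundamentally new mechanism to link the generic's values to \emph{all} members of $\mathcal K_\gamma$ at once.
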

\begin{remark}
Here, $\add(\kappa,\theta)$ stands for Cohen's notion of forcing for adding $\theta$ many subsets of $\kappa$.
Specifically, conditions are functions $f:a\rightarrow2$,
with $a\subseteq\kappa\times\theta$ and $|a|<\kappa$. A condition $f$ extends $g$ iff $f\supseteq g$.
\end{remark}

Theorem~A implies that various forcing axioms and large cardinal axioms that may be preserved by Cohen forcing entail the failure of weak square.
Proofs in this vein will go through the following consequence of Theorem~A.

\begin{cora} Suppose $\theta<\kappa$ are regular cardinals,
and there exists a notion of forcing $\mathbb P$ such that:
\begin{itemize}
\item[(1)] $\mathbb P$ does not change cofinalities,
\item[(2)] $\V^{\mathbb P}\models \kappa^\theta=\kappa$,
\end{itemize}
and for every $\dot{\mathbb{Q}}$ such that $\mathbb P$ forces that $\dot{\mathbb{Q}}$ is a ${<}\kappa$-directed closed notion of forcing:
\begin{itemize}
\item[(3)] $\V^{\mathbb P*\dot{\mathbb{Q}}}\models \refl(\theta,E^{\mu}_{\theta})$ for every regular cardinal $\mu>\kappa$.
\end{itemize}
Then $\square^*_\lambda$ fails for every cardinal $\lambda>\kappa$ of cofinality $\theta$.
\end{cora}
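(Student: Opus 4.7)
The plan is to argue by contradiction: suppose $\lambda>\kappa$ is a cardinal with $\cf(\lambda)=\theta$ and $\square^*_\lambda$ holds in $\V$, and derive a contradiction from hypotheses~(1)--(3) via Theorem~A. Let $G\subseteq\mathbb{P}$ be $\V$-generic.

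First I would verify that $\square^*_\lambda$ persists in $\V[G]$: since by~(1) $\mathbb{P}$ preserves cofinalities and hence cardinals, any $\square^*_\lambda$-sequence from $\V$ remains one in $\V[G]$---each $C_\delta$ is still a club of order-type $\le\lambda$ in $\delta$, and the trace-counting clause is preserved because cardinalities do not move. Next I would try to apply Theorem~A in (a suitable extension of) $\V[G]$. The cardinal-arithmetic hypothesis $\mu^{\cf(\lambda)}<\lambda$ is automatic for $\mu\le\kappa$ by~(2), which gives $\mu^\theta\le\kappa^\theta=\kappa<\lambda$. For $\kappa<\mu<\lambda$ the inequality may fail in $\V[G]$, and the strategy is to find a $<\kappa$-directed closed $\mathbb{Q}\in\V[G]$ that arranges $\mu^\theta<\lambda$ for every $\mu<\lambda$ without collapsing $\lambda$, changing $\cf(\lambda)$, or destroying $\square^*_\lambda$. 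Identifying such a $\mathbb{Q}$---reconciling these competing demands---is the main technical obstacle I anticipate.

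Once $\mathbb{Q}$ is in place, let $H$ be $\mathbb{Q}$-generic over $\V[G]$ and $K$ be $\add(\lambda^+,1)$-generic over $\V[G][H]$. Theorem~A applies in $\V[G][H]$, producing in $\V[G][H][K]$ a non-reflecting stationary subset of $E^{\lambda^+}_\theta$. On the other hand, $\mathbb{Q}*\add(\lambda^+,1)$ is $<\kappa$-directed closed over $\V[G]$---the second factor because it is $<\lambda^+$-closed and $\lambda^+>\kappa$, and composition preserves $<\kappa$-directed closure---so~(3), applied with this composition as $\dot{\mathbb{Q}}$, yields $\refl(\theta,E^{\lambda^+}_\theta)$, and in particular that every stationary subset of $E^{\lambda^+}_\theta$ reflects, in $\V[G][H][K]$. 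This contradicts the non-reflecting stationary set just produced, completing the argument.
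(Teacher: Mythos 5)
Your overall skeleton---pull $\square^*_\lambda$ into $\V[G]$ using cofinality preservation, apply Theorem~A, and contradict hypothesis~(3) at $\mu=\lambda^+$ via the ${<}\lambda^+$-directed closure of $\add(\lambda^+,1)$---is the same as the paper's, but the step you yourself flag as ``the main technical obstacle'' is a genuine gap, and it cannot be filled along the lines you propose. Suppose that in $\V[G]$ some cardinal $\mu<\lambda$ satisfies $\mu^\theta\ge\lambda$; by K\"onig's lemma (as $\cf(\lambda)=\theta$), in fact $\mu^\theta>\lambda$, and then \emph{every} cardinal in $[\mu,\lambda)$ is an offender. No further forcing whatsoever---directed closed or not---can arrange ``$\nu^\theta<\lambda$ for all cardinals $\nu<\lambda$'' while keeping $\lambda$ a cardinal: forcing can only add $\theta$-sequences, so the value $\nu^\theta$ can only shrink by collapsing cardinals, and since the offenders are cofinal in $\lambda$, killing them all means collapsing every cardinal in $[\mu,\lambda)$; but then $\lambda$, being the union of $\cf(\lambda)=\theta<\mu$ many sets each of size $<\mu$, acquires cardinality $\le\mu<\lambda$ and is collapsed too. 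So the auxiliary $\mathbb Q$ you hope to find fails to exist precisely in the case where you need it.

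The idea you are missing---and the actual content of the paper's proof (the Claim inside Corollary~2.6)---is that hypothesis~(3) \emph{already implies}, in $\V^{\mathbb P}$, that $\mu^\theta<\lambda$ for all $\mu<\lambda$; no auxiliary forcing is required. In outline: let $\mu$ be the least regular counterexample, so by~(2) and K\"onig, $\kappa<\mu<\lambda$ and $\mu^\theta>\lambda$. Partition $E^\mu_\theta$ into $\mu$ pairwise disjoint stationary sets $\langle S_i\mid i<\mu\rangle$ (Solovay), and fix continuous cofinal maps $c_\alpha:\cf(\alpha)\rightarrow\alpha$ for limit $\alpha<\mu$. Since $\add(\lambda^+,1)$ adds no new subsets of $\mu$, the $S_i$ remain stationary in $\V^{\mathbb P*\add(\lambda^+,1)}$, where by~(3) the principle $\refl(\theta,E^\mu_\theta)$ holds; so for each $I\in[\mu]^\theta$ there is $\alpha\in\bigcap_{i\in I}\tr(S_i)$, and picking for each $i\in I$ a point of $\ran(c_\alpha)\cap S_i$ codes $I$ as $(c_\alpha[A])_*$ for some $A\in[\cf(\alpha)]^\theta$ (which lies in $\V^{\mathbb P}$, again by closure). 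By minimality of $\mu$, the number of such codes is at most $\lambda$, giving $\mu^\theta\le\lambda$---a contradiction. This is exactly why hypothesis~(3) is stated for \emph{all} regular $\mu>\kappa$ rather than only for $\mu=\lambda^+$: the reflection at the intermediate cardinals $\mu\in(\kappa,\lambda)$ is what secures the cardinal-arithmetic hypothesis of Theorem~A, which your proposal tried, impossibly, to force.
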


In this paper, we provide an application to the subcomplete forcing axiom (\SCFA),
answering Question~2.12 of \cite{Fuchs:HierarchiesOfForcingAxioms} in the affirmative, by proving:
\begin{corb} \SCFA entails that $\square^*_\lambda$ fails for every singular cardinal $\lambda$ of countable cofinality.
\end{corb}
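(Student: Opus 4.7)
The plan is to apply Corollary~A with $\theta=\omega$, $\kappa=\omega_1$, and $\mathbb{P}$ taken to be the trivial forcing. Every singular cardinal $\lambda$ of countable cofinality satisfies $\lambda\ge\aleph_\omega>\omega_1=\kappa$, so the resulting conclusion is exactly the statement of Corollary~B: $\square^*_\lambda$ fails for every such $\lambda$.

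Assume $\SCFA$ holds in $V$. Hypothesis~(1) is immediate. Hypothesis~(2) asks for $\omega_1^{\omega}=\omega_1$, which is a consequence of $\ch$; and $\ch$ in turn follows from $\SCFA$ (a theorem of Jensen that sharply distinguishes $\SCFA$ from $\MM$). For hypothesis~(3), let $\dot{\mathbb Q}$ name any ${<}\omega_1$-directed closed (and hence countably closed) notion of forcing. I would invoke two facts from Jensen's theory of subcomplete forcing: every countably closed poset is subcomplete, and the class of subcomplete posets is closed under two-step iterations, so that $\SCFA$ is preserved by any subcomplete forcing. Consequently $V^{\dot{\mathbb{Q}}}\models\SCFA$. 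The conclusion of~(3) then follows from the known fact that $\SCFA$ implies $\refl(\omega,E^\mu_\omega)$ for every regular cardinal $\mu\ge\omega_2$.

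The main obstacle is confirming, via citation to Jensen's monograph or to the earlier paper \cite{Fuchs:HierarchiesOfForcingAxioms}, that $\SCFA$ yields $\refl(\omega,E^\mu_\omega)$ \emph{uniformly} at all regular $\mu>\omega_1$, rather than only at $\mu=\omega_2$; if the literature furnishes only single-set reflection $\refl(1,E^\mu_\omega)$, one would need to upgrade it to the simultaneous $\omega$-reflection demanded by hypothesis~(3). Once that is in place, the remaining pieces---the cardinal arithmetic consequence of $\ch$, the preservation of $\SCFA$ by countably closed forcing, and the direct invocation of Corollary~A---are routine, and the proof of Corollary~B amounts to assembling these observations.
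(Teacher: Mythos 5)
Your overall architecture (route Corollary~B through Corollary~A with $\mathbb P$ trivial, combining a preservation theorem for \SCFA with its known reflection consequences) matches the paper's, but two of your key steps are false as stated, and they are precisely where the paper has to do real work. First, \SCFA does \emph{not} imply \ch: \MM implies $\SCFA+\neg\ch$, as the paper notes explicitly; Jensen's theorem is that \SCFA implies $2^{\aleph_1}=\aleph_2$ (and \SCFA is merely \emph{consistent} with \ch, which is what distinguishes it from \MM). So your verification of hypothesis~(2) with $\kappa=\omega_1$ collapses: under $\SCFA+\neg\ch$ one has $\omega_1^{\omega}=2^{\aleph_0}>\omega_1$. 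The paper's fix is to take $\kappa:=2^{\aleph_0}$, for which $\kappa^{\aleph_0}=\kappa$ holds outright in \ZFC, and to use $2^{\aleph_1}=\aleph_2$ only to guarantee $\kappa<\aleph_\omega\le\lambda$.

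Second, and more seriously, ``the class of subcomplete posets is closed under two-step iterations, so \SCFA is preserved by any subcomplete forcing'' is a non sequitur whose conclusion is false: the countably closed (hence subcomplete) forcing adding a $\square_{\omega_1}$-sequence by initial segments destroys \SCFA, since \SCFA implies the failure of $\square_{\omega_1}$. Closure under iteration gives that $\mathbb Q*\dot{\mathbb R}$ is subcomplete; but to convert the filter on the iteration supplied by \SCFA in $\V$ into a filter on $\dot{\mathbb R}$ in $\V^{\mathbb Q}$, one must find a single condition of $\mathbb Q$ below the $\omega_1$-many (directed) first coordinates, and this is exactly where ${<}\omega_2$-\emph{directed closure} of $\mathbb Q$ is indispensable. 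Proving this preservation --- Lemma~\ref{lem:SCFApreservedByDirectedClosedForcing}, adapting Larson's theorem for \MM, with the added complications of $\varepsilon$-subcompleteness, the restrictions $\mathbb P_{{\le}p}$, and name translation --- is the main technical content of the paper's Section~3, and you have hand-waved it with an incorrect general principle. Relatedly, your instantiation $\kappa=\omega_1$ makes hypothesis~(3) of Corollary~A outright false under \SCFA: the square-adding forcing just mentioned is ${<}\omega_1$-directed closed and forces $\neg\refl(1,E^{\omega_2}_{\omega})$ in the extension. The paper avoids this by running the argument through Corollary~\ref{cor26} and Theorem~\ref{thm33}, where only the single forcing $\add(\lambda^+,1)$ --- which is ${<}\lambda^+$-directed closed with $\lambda^+\ge\aleph_{\omega+1}$, hence within the scope of the preservation lemma --- needs to be handled. (Your final worry is moot: the literature already gives $\refl(\omega_1,E^\mu_\omega)$ from \SCFA for every regular $\mu>\omega_1$, which is more than hypothesis~(3) requires.)
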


\section{Weak square vs. reflection}\label{sec:WeakSquareVsReflection}

\begin{defn}Suppose $\lambda$ is an uncountable cardinal and $S\subseteq\lambda^+$ is stationary.
\begin{enumerate}
\item $\diamondsuit^*(S)$ asserts the existence of a sequence $\langle \mathcal A_\delta\mid \delta\in S\rangle$ such that:
\begin{enumerate}
\item for every $\delta\in S$, $\mathcal A_\delta\subseteq\mathcal P(\delta)$ and $|\mathcal A_\delta|\le \lambda$;
\item for every subset $Z$ of $\lambda^+$, the following set is nonstationary:
$$\left\{\delta\in S\mid \forall A\in\mathcal A_\delta( A\not= Z\cap \delta)\right\}.$$
\end{enumerate}
\item $\clubsuit^*(S)$ asserts the existence of a sequence $\langle \mathcal A_\delta\mid \delta\in S\rangle$ such that:
\begin{enumerate}
\item for every $\delta\in S$, $\mathcal A_\delta\subseteq[\delta]^{<\lambda}$ and $|\mathcal A_\delta|\le\lambda$;
\item for every cofinal subset $Z$ of $\lambda^+$, the following set is nonstationary:
$$\left\{\delta\in S\mid \forall A\in\mathcal A_\delta( \sup(A\cap Z)<\delta)\right\}.$$
\end{enumerate}
\end{enumerate}
\end{defn}

\begin{fact}\label{Weaksquare} Suppose $\lambda$ is a singular cardinal, and $\square^*_\lambda$ holds.

If every stationary subset of $E^{\lambda^+}_{\cf(\lambda)}$ reflects, then $\clubsuit^*(\lambda^+)$ holds.
\end{fact}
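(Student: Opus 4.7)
First, fix the standard coherent reformulation of $\square^*_\lambda$ as a sequence $\langle\mathcal D_\delta\mid \delta\in\acc(\lambda^+)\rangle$ in which each $\mathcal D_\delta$ is a non-empty family of at most $\lambda$ clubs in $\delta$ of order type $\le\lambda$, satisfying $D\cap\gamma\in\mathcal D_\gamma$ whenever $D\in\mathcal D_\delta$ and $\gamma\in\acc(D)$. Let $\kappa:=\cf(\lambda)$ and fix an increasing sequence $\langle\lambda_i\mid i<\kappa\rangle$ cofinal in $\lambda$. For each $\delta\in\acc(\lambda^+)$ and each $D\in\mathcal D_\delta$, I put into $\mathcal A_\delta$ the set $D$ itself when $\otp(D)<\lambda$ (so $|D|<\lambda$), and the cofinal set $\{\pi_D(\lambda_i)\mid i<\kappa\}$ of size $\kappa$ when $\otp(D)=\lambda$, where $\pi_D\colon\otp(D)\to D$ is the order isomorphism. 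Then $\mathcal A_\delta\subseteq[\delta]^{<\lambda}$ with $|\mathcal A_\delta|\le\lambda$.

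To verify that $\langle\mathcal A_\delta\rangle$ witnesses $\clubsuit^*(\lambda^+)$, I suppose toward a contradiction that there is a cofinal $Z\subseteq\lambda^+$ for which $S^*:=\{\delta\mid\forall A\in\mathcal A_\delta,\ \sup(A\cap Z)<\delta\}$ is stationary; intersecting $S^*$ with the club of limit points of $Z$, I may further assume $\sup(Z\cap\delta)=\delta$ for every $\delta\in S^*$. The goal is to produce from $S^*$ a stationary subset of $E^{\lambda^+}_{\cf(\lambda)}$ that fails to reflect, directly contradicting the hypothesis. The candidate is $T:=S^*\cap E^{\lambda^+}_\kappa$. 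For the non-reflection step, let $\alpha\in E^{\lambda^+}_{>\kappa}$ be a potential reflection point and pick $D\in\mathcal D_\alpha$; then $\otp(D)<\lambda$ because $\cf(\otp(D))=\cf(\alpha)>\kappa=\cf(\lambda)$. By coherence, for every $\gamma\in\acc(D)$ one has $D\cap\gamma\in\mathcal D_\gamma$ with $\otp(D\cap\gamma)<\lambda$, hence $D\cap\gamma\in\mathcal A_\gamma$; consequently, any $\gamma\in\acc(D)$ with $\sup(D\cap Z\cap\gamma)=\gamma$ witnesses $\gamma\notin S^*$. Thus if some $D\in\mathcal D_\alpha$ satisfies $\sup(D\cap Z)=\alpha$, then the set of accumulation points of $D\cap Z$ is a club in $\alpha$ disjoint from $T$, so $\alpha\notin\tr(T)$.

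The stationarity of $T$ requires a complementary argument. Were $T$ nonstationary, $S^*$ would concentrate on $E^{\lambda^+}_{\ne\kappa}$, where every $D\in\mathcal D_\delta$ has $\otp(D)<\lambda$ and so lies in $\mathcal A_\delta$, forcing $D\cap Z$ to be bounded in $\delta$. Selecting a canonical $D_\delta\in\mathcal D_\delta$ for each such $\delta$ and applying Fodor to $\delta\mapsto\sup(D_\delta\cap Z)$ yields a stationary $T'\subseteq S^*$ and $\gamma^*<\lambda^+$ with $D_\delta\cap Z\subseteq\gamma^*+1$ for all $\delta\in T'$; combined with the coherence of $\langle\mathcal D_\delta\rangle$, this should contradict $Z$ being cofinal above $\gamma^*$.

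The main obstacle I foresee is closing out the non-reflection step in the complementary case, namely when $D\cap Z$ is bounded in $\alpha$ for \emph{every} $D\in\mathcal D_\alpha$. Since $|\mathcal D_\alpha|\le\lambda$ while $\cf(\alpha)<\lambda$, the clubs in $\mathcal D_\alpha$ cannot be intersected to a single club, and the cofinality of $Z\cap\alpha$ in $\alpha$ does not immediately clash with each $D\in\mathcal D_\alpha$ missing a tail of $Z$. The resolution should be to use coherence together with the reflection hypothesis applied not just to $T$ but to various derived stationary subsets of $E^{\lambda^+}_\kappa$, so that this pathology cannot persist on a stationary set. This coherence-reflection interplay is the crux of the argument.
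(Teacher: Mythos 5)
The gap you flag in your closing paragraph is not a deferred technicality but the entire content of the Fact, and with your choice of $\mathcal A_\delta$ it cannot be closed. Every set you place in $\mathcal A_\delta$ is a subset of some club $D\in\mathcal D_\delta$ of the weak square sequence, so your sequence can only guess those cofinal $Z$ that the square clubs themselves meet cofinally --- and nothing about $\square^*_\lambda$ ensures this. Concretely: conjugating an arbitrary witnessing sequence by the increasing enumeration of the limit ordinals (which is a normal function), and interpolating order-type-$\omega$ clubs $\{f(\xi)+k\mid 1\le k<\omega\}$ at the images of successor indices (these have empty set of accumulation points, so coherence is vacuous there), one obtains a coherent $\square^*_\lambda$-sequence in which, at every $\delta$ that is a limit of limit ordinals, \emph{every} club consists solely of limit ordinals. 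Now take $Z$ to be the set of successor ordinals below $\lambda^+$. At every such $\delta$, every $A\in\mathcal A_\delta$ satisfies $A\cap Z=\emptyset$, so your $S^*$ contains the club of ordinals that are limits of limit ordinals and is certainly stationary. Moreover, every potential reflection point $\alpha\in E^{\lambda^+}_{>\omega}$ is itself a limit of limit ordinals (for $\beta<\alpha$ one has $\beta+\omega^2<\alpha$), so every $D\in\mathcal D_\alpha$ has $D\cap Z=\emptyset$: your ``complementary case'' occurs at \emph{every} $\alpha$, and your case-one argument never applies. Since by Theorem~12.1 of \cite{MR1838355} the hypotheses of the Fact are consistent (for $\lambda=\aleph_\omega$: $\square^*_{\aleph_\omega}$ together with reflection of every stationary subset of $\aleph_{\omega+1}$), in such a model your $T=S^*\cap E^{\lambda^+}_{\cf(\lambda)}$ is stationary and \emph{does} reflect; hence the non-reflection claim your strategy requires is consistently false for your sequence, and no amount of ``coherence--reflection interplay'' can rescue it. Your Fodor argument for the stationarity of $T$ collapses for the same reason: obtaining $D_\delta\cap Z\subseteq\gamma^*$ for stationarily many $\delta$ contradicts nothing, because the clubs are free to avoid $Z$ altogether.

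For comparison, the paper proves the Fact simply by citing Corollaries 3.12 and 2.15 of \cite{MR2723781}, and the mechanism there indicates the repair your approach would need: the square clubs are not themselves the guessing sets but serve only as pointers into an auxiliary list of candidate subsets of $\lambda^+$ --- the same decoupling device $(B)_*=\bigcup\{X_\beta\mid\beta\in B\}$ that this paper's Lemma~\ref{15} uses to pass from $\clubsuit^*$ to $\diamondsuit^*$ --- with $\square^*_\lambda$ entering through a relative of the approachability ideal, and the reflection hypothesis entering through a pressing-down argument at reflection points of cofinality greater than $\cf(\lambda)$. Any correct proof must likewise allow $\mathcal A_\delta$ to contain sets assembled from arbitrary bounded subsets of $\lambda^+$, rather than only from the clubs of the square sequence.
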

\begin{proof} By Corollaries 3.12 and 2.15 of \cite{MR2723781}.
\end{proof}

Let $\ch_\lambda$ stand for the assertion that $2^\lambda=\lambda^+$.
The following is a slight refinement of Theorem~1.6 of \cite{MR2723781}. We use the notation $\cf(X,{\supseteq})$ for the least $\kappa$ such that there is a set $D\subseteq X$ of cardinality $\kappa$ that is cofinal in $(X,{\supseteq})$, meaning that for every $a\in X$, there is a $b\in D$ with $b\subseteq a$.

\begin{lemma}\label{15} Suppose $\theta<\lambda$ are infinite cardinals, $S\subseteq E^{\lambda^+}_\theta$ is stationary,
and $\cf([\mu]^\theta,{\supseteq})\le\lambda$ for all $\mu\in[\theta,\lambda)$.
Then $\clubsuit^*(S)+\ch_\lambda$ iff $\diamondsuit^*(S)$.
\end{lemma}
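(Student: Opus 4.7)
I will prove the two directions separately.

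\emph{Forward direction.} Assume $\diamondsuit^*(S)$. Every $Y \subseteq \lambda$ satisfies $Y \cap \delta = Y$ for all $\delta > \lambda$, so $\diamondsuit^*(S)$ places $Y$ in $\mathcal{A}_\delta$ for some $\delta \in S \cap (\lambda, \lambda^+)$; hence $|\mathcal{P}(\lambda)| \le \lambda^+$, giving $\ch_\lambda$. For $\clubsuit^*(S)$: to each $A \in \mathcal{A}_\delta$ with $\sup A = \delta$ assign a cofinal $A^* \subseteq A$ of order-type $\theta$, and let $\mathcal{A}'_\delta$ collect these; each member lies in $[\delta]^\theta \subseteq [\delta]^{<\lambda}$. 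If $Z \subseteq \lambda^+$ is cofinal, then on the stationary set of $\delta \in S$ where $Z \cap \delta$ is cofinal in $\delta$ and lies in $\mathcal{A}_\delta$, the set $(Z \cap \delta)^* \subseteq Z$ is cofinal in $\delta$, witnessing $\clubsuit^*(S)$.

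\emph{Reverse direction: setup.} Assume $\clubsuit^*(S)$ and $\ch_\lambda$. The plan is a coding trick. By $\ch_\lambda$, enumerate $\mathcal{P}(\gamma) = \{X^\gamma_\alpha \mid \alpha < \lambda^+\}$ for each $\gamma < \lambda^+$, and fix a pairing $e \colon \lambda^+ \times \lambda^+ \to \lambda^+$ which restricts to a bijection $\delta \times \delta \to \delta$ for $\delta$ in a club $C$. For $Z \subseteq \lambda^+$, let $g_Z(\gamma)$ denote the index of $Z \cap \gamma$ and set
\[
Z^* := \{e(\gamma, g_Z(\gamma)) \mid \gamma < \lambda^+\}.
\]
I arrange $e$ so that $Z^*$ is cofinal in $\lambda^+$ and so that every cofinal $D \subseteq Z^*$ at a level $\delta \in C$ has cofinal first-coordinate projection $\Gamma_D := \{\gamma < \delta \mid \exists \alpha,\, e(\gamma, \alpha) \in D\}$. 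Applying $\clubsuit^*(S)$ to $Z^*$ then yields a nonstationary $E_Z \subseteq S$ such that for every $\delta \in S \cap C \setminus E_Z$, some $A \in \mathcal{A}_\delta$ satisfies $\sup(A \cap Z^*) = \delta$.

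\emph{Building the $\diamondsuit^*$-sequence.} For each $\delta \in S$ and each $A \in \mathcal{A}_\delta$ with $|A| \in [\theta, \lambda)$, the hypothesis $\cf([|A|]^\theta, \supseteq) \le \lambda$, transported via a bijection $|A| \to A$, gives a cofinal family $\mathcal{D}_A \subseteq [A]^\theta$ of size $\le \lambda$. Define the decoding
\[
\phi_\delta(D) := \bigcup\{X^\gamma_\alpha \mid e(\gamma, \alpha) \in D,\ \gamma < \delta\}
\]
for $D \in \mathcal{D}_A$, and let $\mathcal{B}_\delta := \{\phi_\delta(D) \mid A \in \mathcal{A}_\delta,\ D \in \mathcal{D}_A\}$, which has size $\le \lambda$. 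For $\delta \in S \cap C \setminus E_Z$, fix $A \in \mathcal{A}_\delta$ with $B := A \cap Z^*$ cofinal in $\delta$, pick a cofinal $B_0 \in [B]^\theta$, and select $D \in \mathcal{D}_A$ with $D \subseteq B_0 \subseteq Z^*$. Then every $\beta \in D$ decodes as $e^{-1}(\beta) = (\gamma, g_Z(\gamma))$ with $X^\gamma_{g_Z(\gamma)} = Z \cap \gamma$, so $\phi_\delta(D) = Z \cap \sup(\Gamma_D) = Z \cap \delta$, placing $Z \cap \delta$ in $\mathcal{B}_\delta$ as required.

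\emph{Main obstacle.} The crux is arranging matters so that $\sup(\Gamma_D) = \delta$: the passage from the cofinal $B_0$ to $D \subseteq B_0$ in the cofinal family can destroy the cofinality of the underlying set and hence that of its projection. Resolving this requires either refining $\mathcal{D}_A$ to consist of cofinal $\theta$-subsets of $A$ (using the hypothesis on $\cf([|A|]^\theta, \supseteq)$ more delicately) or spacing the pairing $e$ so that distant values of $\beta = e(\gamma, \alpha)$ force distant first coordinates. Once this combinatorial matching is arranged, $\mathcal{B}_\delta$ witnesses $\diamondsuit^*(S)$, completing the reverse direction.
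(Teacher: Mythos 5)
Your first direction ($\diamondsuit^*(S)\Rightarrow\clubsuit^*(S)+\ch_\lambda$) is correct and routine (just note that $A^*$ can be taken of order type $\theta$ because $\cf(\delta)=\theta$). The problem is the converse, which is the substance of the lemma: the gap you flag at the end is genuine and sits exactly at the crux, and neither of your suggested repairs works as stated. ``Spacing the pairing $e$'' is impossible: for each fixed $\gamma$, the map $\alpha\mapsto e(\gamma,\alpha)$ is injective into $\lambda^+$, hence unbounded, so there are arbitrarily large codes whose first coordinate is small. Worse, since your index function $g_Z$ is canonical --- determined once and for all by fixed one-time enumerations of each $\mathcal P(\gamma)$ --- you have no control over where the codes $e(\gamma,g_Z(\gamma))$ land: at a given $\delta$, the set $Z^*\cap\delta$ can be cofinal in $\delta$ while the first coordinates $\Gamma$ involved are bounded below $\delta$, in which case $\phi_\delta(D)=Z\cap\sup(\Gamma_D)\subsetneq Z\cap\delta$ and the guess is wrong. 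Your other worry, that passing from the cofinal $B_0$ to some $D\in\mathcal D_A$ with $D\subseteq B_0$ destroys cofinality, has a cheap resolution you miss: $\theta$ is regular (otherwise $E^{\lambda^+}_\theta$ is empty, hence not stationary), so if $B_0$ is chosen cofinal in $\delta$ of \emph{order type} exactly $\theta$, then every $D\subseteq B_0$ with $|D|=\theta$ is automatically cofinal in $B_0$, hence in $\delta$; no refinement of $\mathcal D_A$ is needed.

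The paper closes the projection gap by making the indices monotone rather than canonical, and this is where $\ch_\lambda$ is used with a twist: enumerate $[\lambda^+]^{\le\lambda}$ as $\langle X_\beta\mid\beta<\lambda^+\rangle$ so that every set appears \emph{cofinally often}; given the target $X$, choose a strictly increasing $f:\lambda^+\rightarrow\lambda^+$ with $X_{f(\alpha)}=X\cap\alpha$; and apply $\clubsuit^*(S)$ not to $\ran(f)$ itself but to a sparse cofinal $Z\subseteq\ran(f)$ such that between any two members of $Z$ there lies a member of the club $D=\{\delta\mid f[\delta]\subseteq\delta\}$. Then for any $B\subseteq Z$ of size $\theta$ cofinal in $\delta$, the preimage $a=f^{-1}[B]$ satisfies $\sup(a)=\sup(B)=\delta$: for $\alpha<\alpha'$ in $a$ there is $\delta'\in D$ with $f(\alpha)<\delta'<f(\alpha')$, and $f[\delta']\subseteq\delta'$ forces $\alpha\le\delta'\le\alpha'$. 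Consequently $(B)_*=\bigcup_{\alpha\in a}(X\cap\alpha)=X\cap\delta$, so $\mathcal B_\delta:=\{(B)_*\mid \exists A\in\mathcal A_\delta\,(B\in\mathcal D_A)\}\cap\mathcal P(\delta)$ is a $\diamondsuit^*(S)$-sequence. The essential idea your write-up lacks is this freedom to re-choose the index of $X\cap\gamma$ at each level, together with an interleaving (or closure-point) device tying indices to codes --- with a single canonical $g_Z$, as in your setup, the decoding provably cannot be controlled.
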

\begin{proof} We focus on the forward implication.
As $\cf([\mu]^\theta,{\supseteq})\le\lambda$ for all $\mu\in[\theta,\lambda)$,
for each $A\in\mathcal P(\lambda^+)$ with $\theta\le|A|<\lambda$,
let $\mathcal D_A$ be cofinal in the poset $([A]^\theta,{\supseteq})$, with $|\mathcal D_A|\le\lambda$.
As $\ch_\lambda$ holds, let $\langle X_\beta\mid \beta<\lambda^+\rangle$ be an enumeration of $[\lambda^+]^{\le\lambda}$
such that every element is enumerated cofinally often.
For every set $B\subseteq\lambda^+$, denote
$$(B)_*:=\bigcup\{X_\beta\mid \beta\in B\}.$$

Let $\langle \mathcal A_\delta\mid \delta\in S\rangle$ be a $\clubsuit^*(S)$-sequence.
For each $\delta\in S$, let:
$$\mathcal B_\delta:=\{ (B)_*\mid \exists A\in\mathcal A_\delta(B\in\mathcal D_A)\}\cap\mathcal P(\delta).$$

Clearly, $|\mathcal B_\delta|\le\lambda$. Thus, to see that $\langle \mathcal B_\delta\mid\delta\in S\rangle$ is a $\diamondsuit^*(S)$-sequence,
let $X$ be an arbitrary subset of $\lambda^+$.
We shall find a club $C\subseteq\lambda^+$ such that $X\cap\delta\in\mathcal B_\delta$ for all $\delta\in S\cap C$.

Recalling that for every $x\in[\lambda^+]^{\le\lambda}$, $\{\beta<\lambda^+\mid x=X_\beta\}$ is cofinal in $\lambda^+$,
we may fix a strictly increasing function $f:\lambda^+\rightarrow\lambda^+$ such that for all $\alpha<\lambda^+$:
$$X_{f(\alpha)}=X\cap\alpha.$$

As $Y:=\ran(f)$ is a cofinal subset of $\lambda^+$, and $D:=\{\delta<\lambda^+\mid f[\delta]\subseteq\delta\}$ is a club in $\lambda^+$,
we may find some sparse enough cofinal subset $Z$ of $Y$ such that for any pair $\beta<\beta'$ of elements from $Z$,
there exists some $\delta\in D$ with $\beta<\delta<\beta'$.

Next, fix a club $C\subseteq\lambda^+$
such that for all $\delta\in S\cap C$, there exists some $A\in\mathcal A_\delta$ with $\sup(A\cap Z)=\delta$.

Let $\delta\in S\cap C$ be arbitrary.
Fix some $A\in\mathcal A_\delta$ such that $\sup(A\cap Z)=\delta$.
By $\cf(\delta)=\theta$, let us fix a cofinal subset $A'$ of $A\cap Z$ of order-type $\theta$.
As $A'\in[A]^\theta$, and $\mathcal D_A$ is cofinal in $([A]^\theta,{\supseteq})$,
let us fix some $B\in\mathcal D_A$ with $B\subseteq A'$. As $|B|=\theta=\otp(A')$,
we infer that $\sup(B)=\sup(A')=\delta$.
As $B\subseteq A'\subseteq A\subseteq Z\subseteq Y=\ran(f)$, we infer the existence of some $a\subseteq\lambda^+$ such that $f\restriction a$ is an order-preserving bijection from $a$ to $B$.
By $B\subseteq Z$ and the choice of $Z$, for any pair of ordinals $\alpha<\alpha'$ from $a$, there exists some $\delta\in D$ such that $f(\alpha)<\delta<f(\alpha')$.
As $\delta\in D$ implies $f[\delta]\subseteq\delta$, this means that $\alpha\le\delta\le\alpha'$.
Consequently, $\sup(a)=\sup(B)=\delta$,
so that
\begin{equation*}
(B)_*=\bigcup\{X_\beta\mid \beta\in B\}=\bigcup\{ X_{f(\alpha)}\mid \alpha\in a\}=X\cap\sup(a)=X\cap\delta.
\end{equation*}

So $X\cap \beta=(B)_*\in\mathcal B_\delta$, and we are done.
\end{proof}

Recall that $\log_\lambda(\lambda^+)$ stands for the least cardinal $\theta$ such that $\lambda^\theta>\lambda$.
The following is a refinement of Theorem~3.2 of \cite{MR523488}, which itself builds on an idea from \cite[p.~387]{MR0277379}.
\begin{lemma}\label{l14} Suppose $\lambda$ is an infinite cardinal, and $\vec{\mathcal A}=\langle\mathcal A_\delta\mid\delta\in E^{\lambda^+}_{\theta}\rangle$
is a $\diamondsuit^*(E^{\lambda^+}_{\theta})$-sequence, for $\theta:=\log_\lambda(\lambda^+)$.
Then in $\V^{\add(\lambda^+,1)}$, $\vec{\mathcal A}$ is no longer a $\diamondsuit^*(E^{\lambda^+}_{\theta})$-sequence.
\end{lemma}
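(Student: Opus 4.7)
The plan is to let $\dot Z$ name the generic subset of $\lambda^+$ added by $\add(\lambda^+,1)$, and to show that the set $\dot S := \{\delta \in E^{\lambda^+}_\theta \mid \dot Z \cap \delta \notin \mathcal A_\delta\}$ is forced to be stationary, which of course refutes that $\vec{\mathcal A}$ remains a $\diamondsuit^*(E^{\lambda^+}_\theta)$-sequence. By a density argument, this reduces to the following task: given any condition $p$ and any name $\dot C$ for a club in $\lambda^+$, produce $q \le p$ and $\delta \in E^{\lambda^+}_\theta$ with $q \Vdash \delta \in \dot C$ and $q \Vdash \dot Z \cap \delta \notin \mathcal A_\delta$.

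I would first record two facts. First, $\theta$ is regular: otherwise, writing $\theta = \sup_{j < \cf(\theta)} \theta_j$ with $\cf(\theta)<\theta$, one would have $\lambda^\theta \le \prod_j \lambda^{\theta_j} \le \lambda^{\cf(\theta)} \le \lambda$, contradicting the definition of $\theta$. Second, $\lambda^{<\theta} = \lambda$. Recall also that $\add(\lambda^+, 1)$ is ${<}\lambda^+$-closed.

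I would then construct, by recursion on $i < \theta$, a tree of conditions $\langle p_s \mid s \in \lambda^{<\theta}\rangle$ and a strictly increasing sequence $\langle \alpha_i \mid i < \theta\rangle$ in $\lambda^+$ such that, for every $s$ of length $i$: $p_s$ extends $p_{s'}$ whenever $s' \sqsubseteq s$; $\dom(p_s) = \alpha_i$; $p_s \Vdash \alpha_j \in \dot C$ for every $j \le i$; and $p_s \ne p_t$ as functions $\alpha_i \to 2$ for distinct $s, t \in \lambda^i$. Each level has width $|\lambda^i| \le \lambda$ by the second preliminary. At successor stages I would use an ``$\omega$-interleaving'': iterate $\omega$ many times through all $\le \lambda$ current conditions, each visit refining one of them to force a strictly larger element of $\dot C$ than any chosen so far; then let $\alpha_{i+1}$ be the supremum of all ordinals produced, use ${<}\lambda^+$-closure to take lower bounds of the resulting $\omega$-sequences, and use the forced closure of $\dot C$ to conclude that these lower bounds force $\alpha_{i+1} \in \dot C$. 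I would then pad the domains up to $\alpha_{i+1}$ and split each condition into $\lambda$ pairwise distinct extensions by prescribing distinct patterns on a fresh block of $\lambda$ coordinates. Limit stages $i < \theta$ take unions; these are conditions (domain size at most $\lambda \cdot i \le \lambda$), and they force $\alpha_i := \sup_{j<i} \alpha_j \in \dot C$ by closure of $\dot C$.

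Setting $\delta := \sup_{i<\theta} \alpha_i$, we get $\cf(\delta) = \theta$, so $\delta \in E^{\lambda^+}_\theta$. For each branch $b \in \lambda^\theta$, $p_b := \bigcup_{i<\theta} p_{b \restriction i}$ is a condition of domain $\delta$ that forces $\delta \in \dot C$ and $\dot Z \cap \delta = \check Y_b$, where $Y_b := p_b^{-1}\{1\}$. Distinct branches produce distinct $Y_b$'s (they diverge at their first splitting level), so there are $\lambda^\theta > \lambda \ge |\mathcal A_\delta|$ candidates; any $b^*$ with $Y_{b^*} \notin \mathcal A_\delta$ yields the desired $q := p_{b^*}$. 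The main technical obstacle is the successor step: the $\omega$-interleaving that forces all $\le \lambda$ current conditions to settle on a common $\alpha_{i+1}$ in $\dot C$, which relies on both the ${<}\lambda^+$-closure of the forcing and the forced closure of $\dot C$. The rest is bookkeeping, and the closing count is clean --- the $\lambda^\theta$ candidates for $\dot Z \cap \delta$ comfortably overwhelm the $\le \lambda$ forbidden values in $\mathcal A_\delta$.
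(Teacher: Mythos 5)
Your proposal is correct and follows essentially the same route as the paper: a tree of conditions of height $\theta$ with $\lambda$-fold splitting, built along a name for a club, yielding $\lambda^\theta$ branch conditions that all force a common $\delta\in E^{\lambda^+}_\theta$ into the club while deciding pairwise distinct values of $\dot Z\cap\delta$, so that the $\lambda^\theta>\lambda\ge|\mathcal A_\delta|$ count finishes the argument. The differences are only mechanical: the paper takes conditions in ${}^{<\lambda^+}\lambda$ (splitting on a single coordinate, converted to a subset of $\lambda^+$ via a fixed bijection) and lets the ordinals $\delta_s$ float at successor levels, synchronizing only at limit stages, so your $\omega$-interleaving at successor steps, while correct, is avoidable.
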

\begin{proof}
For simplicity, we identify $\add(\lambda^+,1)$ with the collection ${}^{<\lambda^+}\lambda$, ordered by inclusion.

In $\V$, fix a bijection $\pi:\lambda^+\leftrightarrow\lambda^+\times\lambda$, and let $D:=\{\delta<\lambda^+\mid \pi[\delta]=\delta\times\lambda\}$,
so that $D$ is a club in $\lambda^+$.
Towards a contradiction, suppose that $G$ is $\add(\lambda^+,1)$-generic over $\V$,
and $\V[G]\models \vec{\mathcal A}\text{ is a }\diamondsuit^*(E^{\lambda^+}_{\theta})\text{-sequence.}$

Work in $\V[G]$. Let $g:=\bigcup G$ be the generic function from $\lambda^+$ to $\lambda$,
and then put $Z:=\pi^{-1}[g]$.
Fix a club $C\subseteq\lambda^+$ such that for all $\delta\in E^{\lambda^+}_{\theta}\cap C$,
we have $Z\cap\delta\in\mathcal A_\delta$. Without loss of generality, we may assume that $C=C\cap D$.

Work back in $\V$. Fix a condition $p\in G$, a nice name $\dot C$ for $C$,
and a canonical name $\dot Z$ for $Z$,
such that $p$ forces: $\dot{C}\subseteq D$, and $\dot{Z}\cap\delta\in\mathcal A_\delta$ for all $\delta\in E^{\lambda^+}_{\theta}\cap\dot{C}$.

We shall now define a sequence $\langle (\beta_\ell,\langle (p_s,\delta_s)\mid s\in{}^{\ell}\lambda\rangle)\mid \ell\le\theta\rangle$
in such a way that:
\begin{enumerate}
\item For any $s\in{}^{\le\theta}\lambda$, $p_s$ is a condition extending $p$, forcing that $\delta_s$ is in $C$;
\item For all $s\subseteq t$ from ${}^{\le\theta}\lambda$, we have $p_s\subseteq p_t$;
\item For all $\ell<\theta$, $s\in{}^{\ell}\theta$, and $i<\lambda$:
\begin{enumerate}
\item $\beta_\ell\le\delta_s<\beta_{\ell+1}<\delta_{s{}^\smallfrown\langle i\rangle}$ and $\delta_{s{}^\smallfrown\langle i\rangle}\in D$;
\item $\beta_{\ell+1}\in \dom(p_{s{}^\smallfrown\langle i\rangle})$ and $p_{s{}^\smallfrown\langle i\rangle}(\beta_{\ell+1})=i$.
\end{enumerate}
\end{enumerate}

The definition is by recursion on $\ell\le\theta$:

$\blacktriangleright$ Let $\beta_0:=0$, and find some condition $p_\emptyset\supseteq p$ along with an ordinal $\delta_\emptyset$,
such that $p_{\emptyset}\Vdash\check{\delta_\emptyset}\in\dot C$.

$\blacktriangleright$ Suppose that $\ell<\theta$,
and that $\langle (\beta_l,\langle (p_s,\delta_s)\mid s\in{}^{l}\lambda\rangle)\mid l\le \ell\rangle$ has already been defined.
Put $\beta_{\ell+1}:=\sup\{ \dom(p_s), \delta_s\mid s\in{}^{\ell}\lambda\}+1$.
By $\ell<\theta$, we have $|{}^\ell\lambda|<\lambda^+$, so that $\beta_{\ell+1}<\lambda^+$.
For all $s\in{}^{\ell}\lambda$ and $i<\lambda$,
define $p_s^i:\beta_{\ell+1}+1\rightarrow\lambda$ by stipulating:
$$p_s^i(\alpha):=\begin{cases}p_s(\alpha),&\text{if }\alpha\in\dom(p_s);\\
i,&\text{otherwise}.
\end{cases}$$
Next, since $\dot C$ is a name for an unbounded subset of $\lambda^+$, find an ordinal $\delta_{s{}^\smallfrown\langle i\rangle}>\beta_{\ell+1}$ and a condition $p_{s{}^\smallfrown\langle i\rangle}$ extending $p_s^i$ such that
$p_{s{}^\smallfrown\langle i\rangle}\Vdash\check{\delta_s}\in\dot C$.

$\blacktriangleright$ Suppose that $\ell\in\acc(\theta+1)$,
and that $\langle (\beta_l,\langle (p_s,\delta_s)\mid s\in{}^{l}\lambda\rangle)\mid l<\ell\rangle$ has already been defined.
Put $\beta_\ell:=\sup_{l<\ell}\beta_l$.
For every $s\in{}^{\ell}\lambda$, let $p_s:=\bigcup\{p_{s\restriction l}\mid l<\ell\}$ and $\delta_s:=\sup\{\delta_{s\restriction l}\mid l<\ell\}$.
By $\ell<\theta^+\le\lambda^+$, $p_s$ is a condition.
Since $\dot C$ is a name for a closed subset of $\lambda^+$, we have $p_s\Vdash \check \delta_s\in\dot C$.
Also, it is clear that $\delta_ s=\beta_\ell$.

This completes the construction.

\medskip

Put $\delta:=\beta_\theta$, so that $\delta\in E^{\lambda^+}_\theta\cap D$.
For each $s\in{}^\theta\lambda$, put $Z_s:=\pi^{-1}[p_s]$.
By $\dom(p_s)=\delta\in D$, we have $p_s\Vdash \dot Z\cap \check\delta=\check{Z_s}$ and $p_s\Vdash\check\delta\in\dot C$.
By Clause~(3)(b) above, for any distinct $s,t\in{}^\theta\lambda$, we have $p_s\neq p_t$ and $Z_s\neq Z_t$,
so since $|\mathcal A_\delta|<\lambda^+\le\lambda^\theta$, we may find some $s\in{}^\theta\lambda$ such that $Z_s\notin\mathcal A_\delta$.
However, this contradicts the fact that $p_s$ extends $p$.
\end{proof}

We are now ready to prove the theorem mentioned in the Introduction.

\begin{thma}
Suppose $\lambda$ is a singular cardinal,
and $\mu^{\cf(\lambda)}<\lambda$ for all $\mu<\lambda$.

If $\square^*_\lambda$ holds, then in the generic extension by $\add(\lambda^+,1)$, there exists a non-reflecting stationary subset of $E^{\lambda^+}_{\cf(\lambda)}$.
\end{thma}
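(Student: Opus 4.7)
My plan is a proof by contradiction: assume that in $\V[G]:=\V^{\add(\lambda^+,1)}$ every stationary subset of $E^{\lambda^+}_{\cf(\lambda)}$ reflects, and derive a contradiction by producing a non-reflecting stationary subset of $E^{\lambda^+}_{\cf(\lambda)}$ in $\V[G]$.

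The first step is to exploit that $\mathbb P:=\add(\lambda^+,1)$ is $\lambda^+$-closed, hence adds no bounded subsets of $\lambda^+$. Consequently, $\square^*_\lambda$ and the cardinal arithmetic hypothesis are preserved in $\V[G]$, stationarity of subsets of each $\alpha<\lambda^+$ is absolute between $\V$ and $\V[G]$, and $\V$-stationary subsets of $\lambda^+$ remain stationary in $\V[G]$. These absoluteness facts let me push the reflection hypothesis down to $\V$: any $\V$-stationary $S\subseteq E^{\lambda^+}_{\cf(\lambda)}$ is $\V[G]$-stationary, hence reflects in $\V[G]$ at some $\alpha<\lambda^+$, and by absoluteness $S\cap\alpha$ is stationary in $\alpha$ already in $\V$.

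Working in $\V$, Fact~\ref{Weaksquare} applied to $\square^*_\lambda$ together with the derived reflection yields $\clubsuit^*(\lambda^+)$, hence $\clubsuit^*(E^{\lambda^+}_{\cf(\lambda)})$. Setting $\theta:=\cf(\lambda)$, the hypothesis provides $\cf([\mu]^{\theta},{\supseteq})\leq\mu^\theta<\lambda$ for $\mu\in[\theta,\lambda)$ and $\log_\lambda(\lambda^+)=\theta$. Once $\ch_\lambda$ has been secured---extracted from $\square^*_\lambda$ plus the local SCH-style hypothesis, or obtained via $\add(\lambda^+,1)$'s collapse of $2^\lambda$ to $\lambda^+$ if it initially fails---Lemma~\ref{15} delivers a $\V$-$\diamondsuit^*(E^{\lambda^+}_\theta)$-sequence $\vec{\mathcal A}$. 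By Lemma~\ref{l14}, $\vec{\mathcal A}$ ceases to be a $\diamondsuit^*$-sequence in $\V[G]$, so there is $X\in\V[G]\cap\mathcal P(\lambda^+)$ making $B:=\{\delta\in E^{\lambda^+}_\theta\mid X\cap\delta\notin\mathcal A_\delta\}$ stationary in $\V[G]$.

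I would then argue $B$ is non-reflecting in $\V[G]$. For $\alpha\in E^{\lambda^+}_{>\omega}$, $X\cap\alpha\in\V$ by closure of $\mathbb P$, so $B\cap\alpha\in\V$, and the $\V$-$\diamondsuit^*$-sequence applied to $X\cap\alpha$ yields a $\V$-club $C_\alpha\subseteq\lambda^+$ disjoint from $\{\delta:(X\cap\alpha)\cap\delta\notin\mathcal A_\delta\}\supseteq B\cap\alpha$; whenever $\alpha\in\acc(C_\alpha)$, the slice $C_\alpha\cap\alpha$ is a club in $\alpha$ witnessing non-stationarity of $B\cap\alpha$. The principal obstacle is arranging $\alpha\in\acc(C_\alpha)$ for every relevant $\alpha$, since $\diamondsuit^*$ only gives non-stationarity of the bad set in $\lambda^+$ rather than in each proper initial segment. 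I would handle this either by selecting the witnessing clubs uniformly---closing under a fixed operation on $\vec{\mathcal A}$ so every $\alpha$ of uncountable cofinality is automatically a limit point of $C_{X\cap\alpha}$---or by replacing $B$ with its intersection with a suitable diagonal intersection of the $C_\alpha$'s.
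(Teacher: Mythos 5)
There is a genuine gap, and it sits exactly where you flagged it: the claim that the escaping set $B=\{\delta\in E^{\lambda^+}_{\theta}\mid X\cap\delta\notin\mathcal A_\delta\}$ is non-reflecting in $\V[G]$. The obstacle is not technical but circular. Fix $\alpha\in E^{\lambda^+}_{>\omega}$ and let $Z:=X\cap\alpha\in\V$; writing $N_Z$ for the bad set of $Z$, one has $B\cap\alpha=N_Z\cap\alpha$, and a club $C\subseteq\lambda^+$ disjoint from $N_Z$ with $\alpha\in\acc(C)$ exists precisely when $N_Z\cap\alpha$ is nonstationary in $\alpha$ --- which is the very conclusion you are after. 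The $\diamondsuit^*$ property only makes $N_Z$ nonstationary in $\lambda^+$, and a set nonstationary in $\lambda^+$ can perfectly well trace stationarily on a fixed $\alpha$ (indeed $N_Z$ could be a stationary subset of $\alpha$ itself, hence bounded in $\lambda^+$). Neither proposed repair helps: a ``uniform selection'' of the witnessing clubs would need exactly this localization, and a diagonal intersection $\triangle_{\alpha<\lambda^+}C_\alpha$ thins $B$ pointwise but produces no club \emph{in} $\alpha$ avoiding $B\cap\alpha$. Worse, your reductio cannot close by these means at all: under the assumption that every stationary subset of $E^{\lambda^+}_{\theta}$ reflects in $\V[G]$, the extension $\V[G]$ itself satisfies $\square^*_\lambda$, $\ch_\lambda$ (this is the Kunen exercise cited in the paper), and the covering bounds, so Fact~\ref{Weaksquare} and Lemma~\ref{15}, applied \emph{inside} $\V[G]$, regenerate a fresh $\diamondsuit^*(E^{\lambda^+}_{\theta})$-sequence there; killing the ground-model sequence is therefore not by itself contradictory. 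This regress is exactly why the paper forces with $\add(\lambda^+,\lambda^{++})$ rather than $\add(\lambda^+,1)$: in that extension \emph{every} candidate sequence appears in some $\V[G_\alpha]$ and is destroyed by the next Cohen set via Lemma~\ref{l14}, so $\diamondsuit^*(E^{\lambda^+}_{\theta})$, hence $\clubsuit^*(\lambda^+)$, fails outright; the contrapositive of Fact~\ref{Weaksquare}, applied in $\V[G]$ where $\square^*_\lambda$ persists, then yields a non-reflecting stationary set abstractly (not as your set $B$), and it is pulled down to a single-Cohen extension because it lives in some $\V[G_\alpha]$ with $\add(\lambda^+,\alpha)\cong\add(\lambda^+,1)$ homogeneous.

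A secondary but real problem is your handling of $\ch_\lambda$ in $\V$. Your first suggestion is false: $\square^*_\lambda$ together with $\mu^{\cf(\lambda)}<\lambda$ for all $\mu<\lambda$ does not imply $2^\lambda=\lambda^+$ (note the theorem deliberately carries no such hypothesis, and the paper's detour through the long product exists precisely to manufacture $\ch_\lambda$ generically). Your second suggestion does not deliver what you need: if $2^\lambda=\lambda^+$ is only arranged by the forcing, then Lemma~\ref{15} produces the $\diamondsuit^*$-sequence in the extension, where Lemma~\ref{l14} can only be applied after forcing \emph{again}, restarting the regress above; even the observation that homogeneity transfers your reflection hypothesis to all $\add(\lambda^+,1)$-extensions (so that one may pass to $\V[G_0]$, where $\ch_\lambda$ holds) does not stop it --- only the $\lambda^{++}$-length product, or a correct proof that $B$ is non-reflecting, would. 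Your opening reductions are sound and match the paper's machinery: $\add(\lambda^+,1)$ is ${<}\lambda^+$-closed, so it preserves $\square^*_\lambda$, the cardinal arithmetic, and stationary subsets of $\lambda^+$, and the push-down of simultaneous reflection from $\V[G]$ to $\V$ via absoluteness of stationarity below $\lambda^+$ is correct; but the proof collapses at the two points above.
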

\begin{proof} Suppose that $\square^*_\lambda$ holds. Let $G$ be ${\add(\lambda^+,\lambda^{++})}$-generic over $\V$.
For all $\alpha\le\lambda^{++}$, let $G_\alpha$ denote the induced generic for ${\add(\lambda^+,\alpha)}$.

\begin{claim} All of the following hold true in $\V[G]$:
\begin{enumerate}
\item $\cf([\mu]^{\cf(\lambda)},{\supseteq})<\lambda$ for all $\mu\in[\cf(\lambda),\lambda)$;
\item $\log_\lambda(\lambda^+)=\cf(\lambda)$;
\item $\ch_\lambda$;
\item $\neg\diamondsuit^*(E^{\lambda^+}_{\cf(\lambda)})$.
\end{enumerate}
\end{claim}
\begin{proof}
(1) As $\add(\lambda^+,\lambda^{++})$ does not add new subsets of $\lambda$,
we have in $\V[G]$ that $\cf([\mu]^{\cf(\lambda)},{\supseteq})\le\mu^{\cf(\lambda)}<\lambda$ for all $\mu\in[\cf(\lambda),\lambda)$.

(2) As $\add(\lambda^+,\lambda^{++})$ does not add new subsets of $\lambda$,
we have in $\V[G]$ that $$\lambda^{<\cf(\lambda)}=\sum_{\mu<\lambda}\mu^{<\cf(\lambda)}\le\sum_{\mu<\lambda}\mu^{\cf(\lambda)}=\lambda,$$
so that $\log_\lambda(\lambda^+)\ge\cf(\lambda)$. By K\"onig's lemma, $\log_\lambda(\lambda^+)\le\cf(\lambda)$.

(3) By Exercise~G4 of \cite[\S~VII]{MR756630}, $\V[G_\alpha]\models \ch_\lambda$ for all nonzero $\alpha\le\lambda^{++}$.

(4) Suppose not.
Fix a $\diamondsuit^*(E^{\lambda^+}_{\cf(\lambda)})$-sequence, $\vec{\mathcal A}=\langle \mathcal A_\delta\mid \delta\in E^{\lambda^+}_{\cf(\lambda)}\rangle$ in $\V[G]$.
As $\vec{\mathcal A}$ has hereditary cardinality less than $\lambda^{++}$, there exists an infinite $\alpha<\lambda^{++}$ such that $\vec{\mathcal A}\in\V[G_\alpha]$.
However, By Lemma~\ref{l14}, $\vec{\mathcal A}$ is no longer a $\diamondsuit^*(E^{\lambda^+}_{\cf(\lambda)})$-sequence in $V[G_{\alpha+1}]$, let alone in $\V[G]$.
This is a contradiction.
\end{proof}
It now follows from Lemma~\ref{15} that $\V[G]\models\neg\clubsuit^*(E^{\lambda^+}_{\cf(\lambda)})$.
In particular, $\V[G]\models\neg\clubsuit^*(\lambda^+)$.
As $\V$ and $\V[G]$ have the same cardinal structure up to $\lambda^+$ (including), it follows that
$\V[G]\models \square^*_\lambda$, and
we infer from Fact~\ref{Weaksquare} that in $\V[G]$, there exists a non-reflecting stationary subset $S$ of $E^{\lambda^+}_{\cf(\lambda)}$.
As $S$ has hereditary cardinality less than $\lambda^{++}$, there exists some infinite $\alpha<\lambda^{++}$ such that $S\in\V[G_\alpha]$.
Since $|\alpha|\le\lambda^+$, we get that $\add(\lambda^+,\alpha)$ is isomorphic to $\add(\lambda^+,1)$.
Since $\add(\lambda^+,1)$ is homogeneous, this shows that, over our ground model, $\add(\lambda^+,1)$
adds a non-reflecting stationary subset of $E^{\lambda+}_{\cf(\lambda)}$.
\end{proof}

\begin{remark} The preceding improves Theorem~3.15 of \cite{MR2723781}.
\end{remark}

It is easy to see that Corollary~A is a consequence of the following.

\begin{cor}\label{cor26}
Suppose $\lambda>\kappa>\theta=\cf(\lambda)$ are infinite cardinals,
and there exists a notion of forcing $\mathbb P$ such that:
\begin{enumerate}
\item $\mathbb P$ does not change cofinalities;
\item $\V^{\mathbb P}\models \kappa^\theta=\kappa$;
\item $\V^{\mathbb P*\add(\lambda^+,1)}\models\refl(\theta,E^\mu_\theta)$ for every regular cardinal $\mu$ with $\lambda>\mu>\kappa$.
\end{enumerate}

Then $\square^*_\lambda$ fails.
\end{cor}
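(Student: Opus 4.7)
Plan:

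Assume, toward a contradiction, that $\square^*_\lambda$ holds in $\V$. By~(1), $\mathbb{P}$ preserves cofinalities, and hence $\square^*_\lambda$: any $\square^*_\lambda$-sequence from $\V$ remains one in $\V^\mathbb{P}$, since clubness of subsets of $\delta<\lambda^+$ is absolute for cofinality-preserving extensions. By~(2), $\V^\mathbb{P}\models\kappa^\theta=\kappa$. Since $\add(\lambda^+,1)$ is $\lambda^+$-closed, it adds no subsets of $\lambda$, so $\square^*_\lambda$ and $\kappa^\theta=\kappa$ persist to $\V^{\mathbb{P}*\add(\lambda^+,1)}$, which by~(3) also witnesses $\refl(\theta,E^\mu_\theta)$ for every regular $\mu\in(\kappa,\lambda)$. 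Note that, by the same $\lambda^+$-closure, reflection at such $\mu$ is already a property of $\V^\mathbb{P}$.

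The plan is to apply Theorem~A not at $\lambda$ itself (where the cardinal-arithmetic hypothesis need not be available) but at a smaller singular cardinal $\lambda'<\lambda$ of cofinality $\theta$ whose successor $(\lambda')^+$ lies in $(\kappa,\lambda)$, so that any resulting non-reflecting stationary subset of $E^{(\lambda')^+}_\theta$ contradicts~(3) at $\mu=(\lambda')^+$. Take $\lambda':=\kappa^{+\theta}$. Starting from $\kappa^\theta=\kappa$ in $\V^\mathbb{P}$, a transfinite induction on $\alpha<\theta$ (using that each $\kappa^{+\alpha}$ is regular with $\cf(\kappa^{+\alpha})>\theta$, so every $\theta$-sequence into it is bounded) yields $(\kappa^{+\alpha})^\theta=\kappa^{+\alpha}$, giving $\mu^\theta<\lambda'$ for every $\mu<\lambda'$. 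Since $\lambda>\kappa$ is of cofinality $\theta$, $\lambda\geq\lambda'$; in the main case $\lambda>\lambda'$, the regular cardinal $(\lambda')^+$ lies properly inside $(\kappa,\lambda)$ and is covered by~(3).

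Two technical obstacles remain, and between them lies the heart of the proof. First, Theorem~A requires $\square^*_{\lambda'}$ as an input, whereas we have only $\square^*_\lambda$: the natural move is to extract a $\square^*_{\lambda'}$-sequence by restricting $\langle C_\delta\mid\delta<\lambda^+\rangle$ to indices $\delta<(\lambda')^+$ and thinning each $C_\delta$ to a cofinal subclub of order type at most $\lambda'$, using $|\delta|^\theta<\lambda'$ to control coherence of traces. Second, Theorem~A delivers its non-reflecting stationary set in the $\add((\lambda')^+,1)$-extension of $\V^\mathbb{P}$, not inside $\V^{\mathbb{P}*\add(\lambda^+,1)}$; and since $\add(\lambda^+,1)$ is $\lambda^+$-closed and adds no subsets of $(\lambda')^+$, the Lemma~2.4 destruction of $\diamondsuit^*$ at $E^{(\lambda')^+}_\theta$ does not transfer via the Cohen generic straightforwardly. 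A route around the second obstacle is to show that $\clubsuit^*((\lambda')^+)$ already fails in $\V^\mathbb{P}$ from $\square^*_\lambda$ together with $\kappa^\theta=\kappa$ alone; then, combined with the $\square^*_{\lambda'}$-sequence extracted as above, the contrapositive of Fact~2.2 applied at $\lambda'$ produces the required non-reflecting stationary subset of $E^{(\lambda')^+}_\theta$ already in $\V^\mathbb{P}$ (hence also in $\V^{\mathbb{P}*\add(\lambda^+,1)}$), contradicting~(3). This last step — a direct combinatorial passage from $\square^*_\lambda+\kappa^\theta=\kappa$ to $\neg\clubsuit^*((\lambda')^+)$ — is where I expect the bulk of the technical work to lie. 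The corner case $\lambda=\kappa^{+\theta}$, in which $(\lambda')^+=\lambda^+$ falls outside $(\kappa,\lambda)$, requires separate treatment, perhaps via a further choice of $\lambda'$ below $\lambda$ or a direct adaptation within $(\kappa,\lambda)$.
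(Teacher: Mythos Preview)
Your plan has two genuine gaps, and it misses the key idea entirely.

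First, $\square^*_\lambda$ does \emph{not} yield $\square^*_{\lambda'}$ for $\lambda'<\lambda$ by ``restricting and thinning.'' A $\square^*_\lambda$-sequence is indexed by ordinals below $\lambda^+$, and its coherence clause controls only the number of traces at points below $\lambda^+$; it says nothing whatsoever about clubs in ordinals below $(\lambda')^+$. There is no restriction map here, and in fact it is perfectly consistent that $\square^*_\lambda$ holds while $\square^*_{\lambda'}$ fails. Second, your proposed ``direct combinatorial passage from $\square^*_\lambda+\kappa^\theta=\kappa$ to $\neg\clubsuit^*((\lambda')^+)$'' is not a technical detail to be filled in---there is simply no reason a coherent-club principle at $\lambda^+$ should force the \emph{failure} of a guessing principle at a smaller successor, and indeed this implication is false in general.

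The paper's route is completely different: Theorem~A is applied at $\lambda$ itself, and hypothesis~(3) is used precisely to manufacture the missing cardinal-arithmetic assumption $\mu^{\theta}<\lambda$ for all $\mu<\lambda$ in $\V^{\mathbb P}$. The argument is this. Suppose $\mu$ is the least regular cardinal below $\lambda$ with $\mu^\theta\ge\lambda$; by~(2) we have $\kappa<\mu<\lambda$, and by K\"onig $\mu^\theta>\lambda$. Partition $E^\mu_\theta$ into $\mu$ pairwise disjoint pieces $\langle S_i\mid i<\mu\rangle$. Since $\add(\lambda^+,1)$ adds no subsets of $\mu$, hypothesis~(3) gives $\refl(\theta,E^\mu_\theta)$ already in $\V^{\mathbb P}$, so for each $I\in[\mu]^\theta$ there is $\alpha\in\bigcap_{i\in I}\tr(S_i)$. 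Fixing a club $c_\alpha$ of order type $\cf(\alpha)$ in each such $\alpha$, the set $I$ is then recoverable from the pair $(\alpha,A)$ for some $A\in[\cf(\alpha)]^\theta$; since $\cf(\alpha)<\mu$ and $\mu$ was minimal, this shows $|[\mu]^\theta|\le\mu\cdot\sup_{\chi<\mu}\chi^\theta\le\lambda$, contradicting $\mu^\theta>\lambda$. With the arithmetic hypothesis in hand, Theorem~A applied in $\V^{\mathbb P}$ produces a nonreflecting stationary subset of $E^{\lambda^+}_{\theta}$ in $\V^{\mathbb P*\add(\lambda^+,1)}$, which is the desired contradiction with reflection. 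The role of Clause~(3) at intermediate $\mu$ is thus to bound $[\mu]^\theta$, not to catch a nonreflecting set there; you should reorganize your argument around this idea.
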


\begin{proof} Towards a contradiction, suppose that $\square^*_\lambda$ holds.

Work in $\V^{\mathbb P}$. By Clause~(1), $\square^*_\lambda$ holds.
\begin{claim} $\mu^{\theta}<\lambda$ for all $\mu<\lambda$.
\end{claim}
\begin{proof} Suppose not. As $\lambda$ is a limit cardinal, let $\mu$ be the least regular cardinal $<\lambda$ to satisfy $\mu^{\theta}\ge\lambda$.
By K\"onig's lemma, then, $\mu^{\theta}>\lambda$.
By Clause~(2), $\theta<\kappa<\mu<\lambda$.

For every limit ordinal $\alpha<\mu$,
fix a strictly increasing and continuous function $c_\alpha:\cf(\alpha)\rightarrow\alpha$ whose range is cofinal in $\alpha$.
Let $\langle S_i\mid i<\mu\rangle$ be some partition of $E^\mu_\theta$ into pairwise disjoint sets.
For every $B\subseteq\mu$, let $(B)_*:=\{ i<\mu\mid B\cap S_i\neq\emptyset\}$.
Put $$\mathcal P:=\{ (c_\alpha[A])_*\mid \alpha\in E^\mu_{>\omega}, A\in[\cf(\alpha)]^\theta\}.$$

By minimality of $\mu$, for every regular uncountable cardinal $\chi<\mu$, we have $\chi^\theta<\lambda$.
Consequently, $|\mathcal P|\le\lambda$. Thus, to meet a contradiction, it suffices to show that $[\mu]^\theta\subseteq\mathcal P$.

To this end, let $I\in[\mu]^\theta$ be arbitrary.
As $\add(\lambda^+,1)$ adds no new subsets of $\mu$, we infer from Clause~(3) the existence
of an ordinal $\alpha\in\bigcap_{i\in I}\tr(S_i)$.
As $\ran(c_\alpha)$ is a club in $\alpha$, we have $\ran(c_\alpha)\cap S_i\neq\emptyset$ for all $i\in I$.
For all $i\in I$, pick $\alpha_i\in\ran(c_\alpha)\cap S_i$, and let $A:=\{c_\alpha^{-1}(\alpha_i)\mid i\in I\}$.
Evidently, $A\in[\cf(\alpha)]^\theta$, so that $I=(c_\alpha[A])_*\in\mathcal{P}$.
\end{proof}

It now follows from Theorem~A that $\V^{\mathbb P*\add(\lambda^+,1)}\models\neg\refl(1,E^{\lambda^+}_{\cf(\lambda)})$, contradicting Clause~(3).
\end{proof}

\section{An application: \SCFA and weak square}

Recall that the forcing axiom for a class $\Gamma$ of forcing notions says that whenever $\mathbb P\in\Gamma$ and $\Delta$ is a collection of dense subsets of $\mathbb P$ with $|\Delta|\le\omega_1$, there is a filter over $\mathbb P$ that is $\Delta$-generic, that is, that meets every set in $\Delta$. The most well-known forcing axioms are Martin's axiom $\MA$ (the forcing axiom for the class of c.c.c.~forcing), the proper forcing axiom \PFA (the forcing axiom for the class of proper forcing notions), and Martin's Maximum \MM (the forcing axiom for the class of forcing notions which preserve stationary subsets of $\omega_1$). The first author has recently studied forcing axioms and other forcing principles for the class of subcomplete forcing notions, a forcing class that was introduced by Jensen in \cite{Jensen:SPSCF},
and we will use the methods of Section~\ref{sec:WeakSquareVsReflection} to determine precisely the effects of the subcomplete forcing axiom, \SCFA, on the extent of weak square principles. A similar project for \MM was carried out by Cummings and Magidor in \cite{MR2811288}, where the authors proved the following result:

\begin{fact}[Cummings and Magidor, {\cite[Theorem 1.2]{MR2811288}}]\label{thm:EffectsOfMM}
Assume \MM holds, and let $\lambda$ be an uncountable cardinal.
\begin{enumerate}
  \item                                 If $\cf(\lambda)=\omega$, then $\square^*_\lambda$ fails.
  \item \label{item:CofOmega1}          If $\cf(\lambda)=\omega_1$, then $\square_{\lambda,\mu}$ fails for every $\mu<\lambda$.
  \item \label{item:Cof>=Omega2}        If $\cf(\lambda)\ge\omega_2$, then $\square_{\lambda,\mu}$ fails for every $\mu<\cf(\lambda)$.
\end{enumerate}
\end{fact}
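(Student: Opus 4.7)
The plan is to dispatch the three clauses separately. Clauses~(2) and (3) will follow directly from Fact~\ref{squareNreflection}, once we invoke the Foreman--Magidor--Shelah theorem that under \MM, $\refl(\omega_1,E^\mu_\omega)$ holds for every regular $\mu\geq\omega_2$; Clause~(1) will require Corollary~A.

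For Clause~(3), suppose toward a contradiction that $\square_{\lambda,\mu}$ holds with $\mu<\cf(\lambda)$ and $\cf(\lambda)\geq\omega_2$. Fact~\ref{squareNreflection}(2) delivers $\neg\refl(1,E^{\lambda^+}_\omega)$, contradicting the single-reflection consequence of \MM. Clause~(2) is analogous: from $\square_{\lambda,\mu}$ with $\mu<\lambda$ and $\cf(\lambda)=\omega_1$, Fact~\ref{squareNreflection}(1) gives $\neg\refl(\omega_1,E^{\lambda^+}_\omega)$, in direct conflict with the Foreman--Magidor--Shelah theorem.

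For Clause~(1), apply Corollary~A with $\theta=\omega$, $\kappa=\omega_2$, and $\mathbb{P}$ the trivial forcing. One checks: (i) trivial forcing changes no cofinalities; (ii) \MM entails $2^{\aleph_0}=\aleph_2$, hence $\omega_2^\omega=\omega_2$; and (iii) for every ${<}\omega_2$-directed closed $\dot{\mathbb{Q}}$, $\V^{\dot{\mathbb{Q}}}\models\refl(\omega,E^\mu_\omega)$ for every regular $\mu>\omega_2$. Corollary~A then yields $\neg\square^*_\lambda$ for every $\lambda>\omega_2$ of countable cofinality, which exhausts all singular $\lambda$ with $\cf(\lambda)=\omega$ (the least such, $\aleph_\omega$, already exceeds $\omega_2$).

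The main obstacle is verifying (iii) in the previous paragraph. The cleanest route is to invoke the preservation of \MM under ${<}\omega_2$-directed closed forcing (a Larson-type theorem): then $\refl(\omega_1,E^\mu_\omega)$ persists in $\V^{\dot{\mathbb{Q}}}$, and in particular the weaker $\refl(\omega,E^\mu_\omega)$ holds there. A more elementary alternative exploits that ${<}\omega_2$-directed closed forcing is $\sigma$-closed and hence preserves stationarity of every ground-model subset of $E^\mu_\omega$; one then reduces an $\omega$-family of $\V^{\dot{\mathbb{Q}}}$-names for stationary subsets of $E^\mu_\omega$ to an $\omega$-family of ground-model stationary sets via a standard diagonal/fusion argument, and invokes the ground-model reflection.
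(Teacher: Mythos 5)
Your primary route is correct, and it is essentially the paper's own: the paper states this Fact without proof, as a quotation of \cite[Theorem 1.2]{MR2811288}, but its Theorem~\ref{thm33} is precisely the abstract form of your argument. Clauses (2) and (3) follow, as you say, from Fact~\ref{squareNreflection} together with the Foreman--Magidor--Shelah consequence of \MM that $\refl(\omega_1,E^\mu_\omega)$ holds for every regular $\mu>\omega_1$; clause (1) follows from Corollary~A with $\mathbb P$ trivial, $\theta=\omega$, $\kappa=2^{\aleph_0}=\aleph_2$, where hypothesis (3) of Corollary~A is supplied by Larson's theorem \cite[Theorem 4.3]{MR1782117} that \MM is preserved by ${<}\omega_2$-directed closed forcing (the paper proves the analogous preservation for \SCFA in Lemma~\ref{lem:SCFApreservedByDirectedClosedForcing} and then instantiates Theorem~\ref{thm33}). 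Your parameter checks are right: $\aleph_2^{\aleph_0}=\aleph_2$ under \MM, and every uncountable $\lambda$ of countable cofinality satisfies $\lambda\ge\aleph_\omega>\aleph_2$. Be aware that Cummings and Magidor's original proof of clause (1) is a direct argument predating Theorem~A, whereas your derivation routes through Theorem~A of the present paper; within this paper that is exactly the intended re-derivation.

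The ``more elementary alternative'' you offer for step (iii), however, is a genuine error, and instructively so: it is refuted by Theorem~A itself. A $\sigma$-closed forcing does preserve the stationarity of every ground-model stationary subset of $E^\mu_\omega$, and likewise preserves its reflection points; but for precisely that reason, the non-reflecting stationary set added by $\add(\lambda^+,1)$ under $\square^*_\lambda$ cannot contain any ground-model stationary subset, so no reduction of names for stationary subsets of $E^\mu_\omega$ to ground-model stationary sets can be faithful. Concretely, the trace set $T=\{\alpha\in E^\mu_\omega\mid\exists q\le p\ (q\Vdash\check\alpha\in\dot S)\}$ is stationary in $\V$, but no single condition forces a reflection point of $T$ to be a reflection point of $\dot S$, and no fusion argument can close this gap in \ZFC: starting from a model as in \cite[Theorem 12.1]{MR1838355}, in which $\square^*_{\aleph_\omega}$ holds while every stationary subset of $\aleph_{\omega+1}$ reflects (with the arithmetic $\mu^{\aleph_0}<\aleph_\omega$ for all $\mu<\aleph_\omega$ arranged), Theorem~A shows that the ${<}\aleph_{\omega+1}$-directed closed --- in particular $\sigma$-closed and ${<}\omega_2$-directed closed --- forcing $\add(\aleph_{\omega+1},1)$ destroys $\refl(1,E^{\aleph_{\omega+1}}_\omega)$, let alone $\refl(\omega,E^{\aleph_{\omega+1}}_\omega)$. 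Indeed, the entire point of this paper is that simultaneous reflection is \emph{not} in general preserved by such forcing unless weak square already fails. So preservation must pass through the preservation of \MM itself, as in your first route; the fallback paragraph should be deleted.
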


Note that, in addition, \MM implies the failure of $\square^*_{\omega_1}$.
To see this, recall that by a theorem of Baumgartner (see \cite[Thm.~7.7]{MR776625}), \PFA (and hence \MM) implies that there is no $\omega_2$-Aronszajn tree,
while $\square^*_{\omega_1}$ is equivalent to the existence of a special $\omega_2$-Aronszajn tree.

To show that their result is optimal, Cummings and Magidor also proved the following.

\begin{fact}[Cummings and Magidor, {\cite[Theorem 1.3]{MR2811288}}]\label{fact32}
It is consistent that $\MM$ holds and for all cardinals $\lambda>\omega_1$:\footnote{There is a slight inaccuracy in the original formulation of \cite[Theorem 1.3]{MR2811288}, where it is not required that $\lambda>\omega_1$, and as a result it is claimed that $\MM$ is consistent with $\square^*_{\omega_1}$, which is not true, as pointed out above.}
\begin{enumerate}
  \item If $\cf(\lambda)=\omega_1$,     then $\square^*_\lambda$ holds.
  \item If $\cf(\lambda)\ge\omega_2$,   then $\square_{\lambda,\cf(\lambda)}$ holds.
\end{enumerate}
\end{fact}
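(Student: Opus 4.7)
My plan is to combine the standard forcing construction of $\MM$ with a subsequent highly closed class iteration that installs the required square sequences at singular cardinals above $\omega_2$. I would start with a ground model $V$ of the GCH containing a supercompact cardinal $\kappa$. Performing a Laver preparation for $\kappa$ followed by the Foreman-Magidor-Shelah countable-support iteration of length $\kappa$ of semi-proper forcings yields a model $W\models \MM$ in which $\omega_2^W=\kappa$, all cardinals and cofinalities above $\kappa$ are preserved from $V$, and $\ch_\lambda$ continues to hold at every cardinal $\lambda\ge\kappa$.

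Over $W$, I would perform a reverse-Easton class iteration $\mathbb Q$ that, at each singular $\lambda>\omega_1$, adds a $\square^*_\lambda$-sequence (when $\cf(\lambda)=\omega_1$) or a $\square_{\lambda,\cf(\lambda)}$-sequence (when $\cf(\lambda)\ge\omega_2$) using the canonical notion of forcing whose conditions are coherent bounded initial segments of such a sequence, ordered by end-extension. Each stage is ${<}\lambda^+$-strategically closed, hence adds no bounded subsets of $\lambda^+$, and a standard Easton-support argument shows that $\mathbb Q$ preserves all cardinals and cofinalities and that no later stage destroys the coherence of the sequences added at earlier stages. The regular case $\lambda=\cf(\lambda)\ge\omega_2$ fits into the same framework, yielding $\square^*_\lambda$ in the final model.

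The principal obstacle is verifying that $\MM$ survives the class iteration $\mathbb Q$. Each factor of $\mathbb Q$ is at least $\omega_2$-closed, so it adds no new subsets of $\omega_1$, and hence no new instances of stationary-preserving forcing of size $\aleph_1$ and no new $\aleph_1$-indexed families of dense sets against which $\MM$ could fail in the extension. Combined with a standard preservation argument---lifting, for each instance of $\MM$ in the extension, a would-be generic filter back to an equivalent filter meeting a ground-model family of dense sets, via the high closure of the tail forcing---one concludes $W^{\mathbb Q}\models\MM$. Executing this preservation uniformly across a proper-class iteration (rather than for a single step), and handling the strategically closed rather than fully closed factors, constitute the main technical hurdles.
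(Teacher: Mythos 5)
The paper offers no proof of this statement---it is cited verbatim from Cummings and Magidor \cite[Theorem 1.3]{MR2811288}---so your proposal has to be measured against the cited argument, whose overall architecture you do reproduce: obtain \MM from a supercompact, then install the square sequences by an Easton-support iteration of the canonical end-extension forcings. The genuine gap is in what you yourself call the principal obstacle, the preservation of \MM. Your justification---that each factor adds no new subsets of $\omega_1$ and ``hence no new instances of stationary-preserving forcing \ldots against which \MM could fail,'' supplemented by a vague ``lifting'' through the closure of the tail---is a non sequitur, and demonstrably so: the standard forcing to add a non-reflecting stationary subset of $E^{\omega_2}_\omega$ by initial segments, or to add a $\square_{\omega_1}$-sequence by initial segments, is ${<}\omega_2$-strategically closed and so adds no new subsets of $\omega_1$, yet each outright destroys \MM (which implies $\refl(1,E^{\omega_2}_\omega)$ and $\neg\square_{\omega_1}$). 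Distributivity at $\omega_2$ therefore cannot carry the preservation, and no filter-lifting argument is available either, since the dense sets witnessing a purported failure of \MM live in the extension, not the ground model.

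What the Cummings--Magidor proof actually runs on is Larson's theorem \cite[Theorem 4.3]{MR1782117} that \MM is preserved by ${<}\omega_2$-\emph{directed} closed forcing---precisely the result whose subcomplete analogue this paper proves as Lemma~\ref{lem:SCFApreservedByDirectedClosedForcing}: one applies \MM in the ground model to the two-step iteration $\mathbb P*\dot{\mathbb Q}$ and uses directedness to find a master condition below $\omega_1$-many first coordinates of the resulting filter. Correspondingly, you must verify that your square-adding forcings (and the whole iteration) are ${<}\omega_2$-directed closed, not merely strategically closed, and this is a delicate point rather than a routine one: in the end-extension ordering, directed sets are chains, and a chain of length $\omega_1$ has a lower bound only if one can place a club $C_\delta$ atop its union coherently. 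For $\square^*_\lambda$ and $\square_{\lambda,\cf(\lambda)}$ with $\cf(\lambda)\ge\omega_2$, the weak coherence clause tolerates the at most one new trace that $C_\delta$ contributes at each $\gamma<\delta$, so an arbitrary club of order type $\omega_1$ works; for $\square_\lambda$ itself this step fails---as it must, since \MM refutes $\square_\lambda$. Your proposal never isolates directedness as the operative property (``at least $\omega_2$-closed'' is all you invoke), so the central step of the proof is missing.
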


The main motivation for investigating the forcing axiom for the class of subcomplete forcing is that on the face of it, this class is very different from the other forcing classes considered, because subcomplete forcing does not add reals. Jensen \cite{Jensen:FAandCH}, \cite{MR2840749}
proved the consistency of \SCFA, starting from a model of \ZFC with a supercompact cardinal, by adapting the Baumgartner proof of the consistency of \PFA from the same assumption, exploiting the fact, proven in \cite{Jensen:SPSCF}, \cite{MR2840749},
that subcomplete forcing is iterable with revised countable support. Since subcomplete forcing does not add reals, it follows that \SCFA is compatible with the continuum hypothesis, while the other forcing axioms imply the failure of \ch. In fact, \SCFA is even consistent with $\diamondsuit$. However, \SCFA does not imply \ch, since \MM implies $\SCFA + \neg\ch$. It is maybe a little surprising, then, that \SCFA turns out to have many of the same consequences \MM has. Thus, Jensen showed \cite{Jensen:FAandCH}, \cite{MR2840749}
that \SCFA implies the singular cardinal hypothesis and the failure of $\square_\kappa$ for every uncountable cardinal $\kappa$.

It was observed by the first author in \cite[Theorem 2.7 and Observation 2.8]{Fuchs:HierarchiesOfForcingAxioms} that it follows from Jensen's work that \SCFA implies the principle $\refl(\omega_1,E^\mu_\omega)$
for every regular cardinal $\mu>\omega_1$, and hence, by Fact~\ref{squareNreflection},
that Clauses (\ref{item:CofOmega1}) and (\ref{item:Cof>=Omega2}) of Fact~\ref{thm:EffectsOfMM} are also consequences of \SCFA, see \cite[Theorem 2.11]{Fuchs:HierarchiesOfForcingAxioms}. However, it was left open whether \SCFA implies the failure of $\square^*_\lambda$ when $\lambda$ is a singular cardinal of countable cofinality, see \cite[Question 2.12]{Fuchs:HierarchiesOfForcingAxioms}.

In order to answer this question, we will establish a version for subcomplete forcing of a well-known result of Larson \cite[Theorem 4.3]{MR1782117},
asserting that Martin's Maximum is preserved by ${<}\omega_2$-directed closed forcing.
The point is that Fact~\ref{thm:EffectsOfMM} admits the following abstract generalization:

\begin{thm}\label{thm33} Assume $2^{\aleph_0}<\aleph_\omega$ and that for every ${<}\aleph_{\omega+1}$-directed closed notion of forcing $\mathbb Q$,
we have $\V^{\mathbb Q}\models\refl(\omega_1,E^\mu_\omega)$ for every regular cardinal $\mu>\omega_1$.

Let $\lambda$ be an uncountable cardinal.
\begin{enumerate}
  \item If $\cf(\lambda)=\omega$, then $\square^*_\lambda$ fails.
  \item If $\cf(\lambda)=\omega_1$, then $\square_{\lambda,\mu}$ fails for every $\mu<\lambda$.
  \item If $\cf(\lambda)\ge\omega_2$, then $\square_{\lambda,\mu}$ fails for every $\mu<\cf(\lambda)$.
\end{enumerate}
\end{thm}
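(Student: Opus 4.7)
The plan is to handle the three clauses in parallel, reducing each to a failure of stationary reflection that the standing hypothesis (applied to a suitable ${<}\aleph_{\omega+1}$-directed closed forcing) forbids. Clauses (2) and (3) go through Fact~\ref{squareNreflection} with trivial auxiliary forcing, while clause (1) — the substantive one — goes through Corollary~\ref{cor26}.

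For clause (2), I would argue by contradiction: assume $\cf(\lambda)=\omega_1$ and $\square_{\lambda,\mu}$ holds for some $\mu<\lambda$. Taking $\mathbb Q$ to be the trivial forcing (which is vacuously ${<}\aleph_{\omega+1}$-directed closed) in the standing hypothesis yields $\V\models\refl(\omega_1,E^{\lambda^+}_\omega)$, since $\lambda^+$ is regular and greater than $\omega_1$. On the other hand, Fact~\ref{squareNreflection}(1) says that $\square_{\lambda,\mu}$ entails the failure of $\refl(\cf(\lambda),S)=\refl(\omega_1,S)$ for every stationary $S\subseteq\lambda^+$, and specializing to $S:=E^{\lambda^+}_\omega$ gives a contradiction. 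Clause (3) is symmetric: if $\cf(\lambda)\ge\omega_2$ and $\square_{\lambda,\mu}$ holds for some $\mu<\cf(\lambda)$, then Fact~\ref{squareNreflection}(2) produces the failure of $\refl(1,E^{\lambda^+}_\omega)$ in $\V$, which in turn contradicts the instance $\refl(\omega_1,E^{\lambda^+}_\omega)$ furnished by the hypothesis (trivially implying $\refl(1,E^{\lambda^+}_\omega)$).

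For clause (1), I plan to invoke Corollary~\ref{cor26} with $\theta:=\omega$, $\kappa:=(2^{\aleph_0})^+$, and $\mathbb P$ the trivial forcing. Since $\lambda$ is an uncountable cardinal of cofinality $\omega$, necessarily $\lambda\ge\aleph_\omega$, so the assumption $2^{\aleph_0}<\aleph_\omega$ secures $\omega_1<\kappa<\aleph_\omega\le\lambda$. The identity $\kappa^\omega=\kappa$ holds because $\kappa$ is a successor of uncountable cofinality above $2^{\aleph_0}$, giving Clause~(2) of Corollary~\ref{cor26}; Clause~(1) is automatic. For Clause~(3), note that $\add(\lambda^+,1)$ is $\lambda^+$-directed closed and $\lambda^+\ge\aleph_{\omega+1}$, so $\add(\lambda^+,1)$ lies in the class to which the standing hypothesis applies; hence $\V^{\add(\lambda^+,1)}\models\refl(\omega_1,E^\mu_\omega)$ for every regular $\mu>\omega_1$, and this trivially implies $\refl(\omega,E^\mu_\omega)$ for every regular $\mu\in(\kappa,\lambda)$. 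Corollary~\ref{cor26} then delivers the failure of $\square^*_\lambda$.

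The only delicate step is the selection of the auxiliary cardinal $\kappa$ in the interval $(\omega,\lambda)$ with $\kappa^\omega=\kappa$, and this is precisely where the arithmetic hypothesis $2^{\aleph_0}<\aleph_\omega$ is used; the remainder of the argument is a direct bookkeeping application of Fact~\ref{squareNreflection} and Corollary~\ref{cor26}.
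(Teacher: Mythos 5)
Your proof is correct and follows essentially the same route as the paper's: clause (1) via Corollary~\ref{cor26} with $\theta=\omega$ and $\mathbb P$ trivial, using that $\add(\lambda^+,1)$ is ${<}\lambda^+$-directed closed and $\lambda^+\ge\aleph_{\omega+1}$ to invoke the standing hypothesis, and clauses (2) and (3) by playing Fact~\ref{squareNreflection} against the instance $\refl(\omega_1,E^{\lambda^+}_\omega)$ furnished by the hypothesis with trivial $\mathbb Q$. The only (immaterial) difference is your choice $\kappa:=(2^{\aleph_0})^+$, with $\kappa^\omega=\kappa$ via Hausdorff's formula, where the paper takes $\kappa:=2^{\aleph_0}$ directly.
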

\begin{proof} (1) Fix an arbitrary singular cardinal $\lambda$ of countable cofinality.
We verify that Corollary~\ref{cor26} applies with $\kappa:=2^{\aleph_0}$, $\theta:=\aleph_0$, and $\mathbb P$ the trivial forcing.
Evidently, $\V^{\mathbb P}\models \kappa^\theta=\kappa$.
As $\lambda\ge\aleph_\omega$ and $\mathbb Q:=\mathbb P*\add(\lambda^+,1)$ is ${<}\lambda^+$-directed closed, we have $\V^{\mathbb P*\add(\lambda^+,1)}\models\refl(\omega_1,E^\mu_\omega)$ for every regular cardinal $\mu>\omega_1$.
In particular, $\V^{\mathbb P*\add(\lambda^+,1)}\models\refl(\omega,E^\mu_\omega)$ for every regular cardinal $\mu$ with $\lambda>\mu>\kappa$.

(2) By Fact~\ref{squareNreflection}(1).

(3) By Fact~\ref{squareNreflection}(2).
\end{proof}

Before we can prove that \SCFA is preserved by ${<}\omega_2$-directed closed forcing, we need some terminology.

\begin{defn} A transitive set $N$ (usually a model of $\ZFCm$) is \emph{full} if there is an ordinal $\gamma$ such that $L_\gamma(N)\models\ZFCm$ and $N$ is regular in $L_\gamma(N)$, meaning that if $x\in N$, $f\in L_\gamma(N)$ and $f:x\longrightarrow N$, then $\ran(f)\in N$.
\end{defn}

\begin{defn} For a poset $\mathbb P$, $\delta(\mathbb P)$ is the minimal cardinality of a dense subset of $\mathbb P$.
\end{defn}

\begin{defn} Let $N=L^A_\tau=\kla{L_\tau[A],\in,A\cap L_\tau[A]}$ be a $\ZFCm$ model, $\varepsilon$ an ordinal and $X\cup\{\varepsilon\}\subseteq N$. Then $C^N_\varepsilon(X)$ is the smallest $Y\prec N$ (with respect to inclusion) such that $X\cup\varepsilon\subseteq Y$.
\end{defn}

Note that models $N$ of the form described in the previous definition have definable Skolem-functions, so that the definition of $C^N_\varepsilon(X)$ makes sense. The following concept was introduced in \cite{Fuchs:ParametricSubcompleteness}.

\begin{defn} Let $\varepsilon$ be an ordinal.
A forcing $\mathbb P$ is $\varepsilon$-\emph{subcomplete} if there is a cardinal $\theta>\varepsilon$ which verifies the $\varepsilon$-subcompleteness of $\mathbb P$, which means that $\mathbb P\in H_\theta$, and for any $\ZFCm$ model $N=L_\tau^A$ with $\theta<\tau$ and $H_\theta\subseteq L_\tau[A]$, any $\sigma:\bar N\prec N$ such that $\bar N$ is countable, transitive and full and such that $\mathbb P,\theta,\varepsilon\in\ran(\sigma)$, any $\bar G\subseteq\bar{\mathbb P}$ which is $\bar{\mathbb P}$-generic over $\bar N$, and any $s\in\ran(\sigma)$, the following holds. Letting $\sigma(\kla{\bar{s},\bar{\theta},\bar{\mathbb P}})=\kla{s,\theta,\mathbb P}$, there is a condition $p\in\mathbb P$ such that whenever $G\subseteq\mathbb P$ is $\mathbb P$-generic over $\V$ with $p\in G$, there is in $\V[G]$ a $\sigma'$ such that
\begin{enumerate}
  \item $\sigma':\bar N\prec N$,
  \item $\sigma'(\kla{\bar{s},\bar{\theta},\bar{\mathbb P},\bar{\varepsilon}})=\kla{s,\theta,\mathbb P,\varepsilon}$,
  \item $(\sigma')``\bar G\subseteq G$,
  \item $C^N_{\varepsilon}(\ran(\sigma'))=C^N_{\varepsilon}(\ran(\sigma))$.
\end{enumerate}
\end{defn}

Using this terminology, subcompleteness is a special case of $\varepsilon$-subcompleteness. Essential subcompleteness was introduced in \cite{Fuchs:ParametricSubcompleteness}.

\begin{defn} A poset $\mathbb P$ is \emph{subcomplete} iff $\mathbb P$ is $\delta(\mathbb P)$-subcomplete. $\mathbb P$ is \emph{essentially subcomplete} iff $\mathbb P$ is $\varepsilon$-subcomplete, for some $\varepsilon$.
\end{defn}

Increasing $\varepsilon$ weakens the condition of being $\varepsilon$-subcomplete, but if a forcing $\mathbb P$ is $\varepsilon$-subcomplete, for an $\varepsilon>\delta(\mathbb P)$, then $\mathbb P$ is forcing equivalent to a subcomplete forcing, where we say that two forcing notions are forcing equivalent if they give rise to the same forcing extensions. In fact, it is shown in \cite{Fuchs:ParametricSubcompleteness} that the class of essentially subcomplete forcing notions is the closure of the class of subcomplete forcing notions under forcing equivalence.

\begin{defn} Let $\mathbb P$ be a notion of forcing, and $p\in\mathbb P$ a condition. Then $\mathbb P_{{\le}p}$ is the restriction of $\mathbb P$ to $\{q\mid q\le_{\mathbb P} p\}$.
\end{defn}

\begin{lemma} Let $\mathbb P$ be $\varepsilon$-subcomplete and $p\in\mathbb P$. Then $\mathbb P_{{\le}p}$ is $\varepsilon$-subcomplete.
\end{lemma}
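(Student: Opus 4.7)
The plan is to show that any cardinal $\theta$ witnessing the $\varepsilon$-subcompleteness of $\mathbb P$ (chosen large enough that $\mathbb P_{{\le}p}\in H_\theta$) also witnesses it for $\mathbb P_{{\le}p}$. Fix such a $\theta$ and consider data $N=L^A_\tau$, $\sigma:\bar N\prec N$, $\bar G$ a $\bar{\mathbb P}_{{\le}\bar p}$-generic filter over $\bar N$ (where $\bar{\mathbb P}_{{\le}\bar p}:=\sigma^{-1}(\mathbb P_{{\le}p})$), and $s\in\ran(\sigma)$, as in the definition applied to $\mathbb P_{{\le}p}$. Since $p$ is the $\le$-maximum of $\mathbb P_{{\le}p}$ it is definable from $\mathbb P_{{\le}p}$, so $p\in\ran(\sigma)$ and $\bar p:=\sigma^{-1}(p)\in\bar{\mathbb P}_{{\le}\bar p}$.

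Assume first that $\mathbb P\in\ran(\sigma)$. Set $\bar{\mathbb P}:=\sigma^{-1}(\mathbb P)$ and form the upward closure $\bar G':=\{\bar r\in\bar{\mathbb P}\mid \exists \bar r'\in\bar G,\ \bar r'\le\bar r\}$, which is $\bar{\mathbb P}$-generic over $\bar N$ and contains $\bar p$. Apply the $\varepsilon$-subcompleteness of $\mathbb P$ to the data $(\sigma,\bar G',\kla{s,p})$ to obtain a condition $q\in\mathbb P$. The resulting $\sigma'\in\V[G]$ (for any $\mathbb P$-generic $G\ni q$) satisfies $\sigma'(\bar p)=p$ and $\sigma''\bar G'\subseteq G$, so $p\in G$; hence $q$ is compatible with $p$, and since $\{r\in\mathbb P\mid r\le q\text{ and }r\le p\}$ is dense below $q$, I may replace $q$ by some $q^*\in\mathbb P_{{\le}p}$ with $q^*\le q$. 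Then $q^*$ works for $\mathbb P_{{\le}p}$: given $H\subseteq\mathbb P_{{\le}p}$ generic over $\V$ with $q^*\in H$, its upward closure $G$ in $\mathbb P$ is $\mathbb P$-generic over $\V$ with $q\in G$, and the resulting $\sigma'\in\V[G]=\V[H]$ satisfies the four clauses for $\mathbb P_{{\le}p}$: clause (2) because $\bar{\mathbb P}_{{\le}\bar p}$ is definable from $\bar{\mathbb P}$ and $\bar p$; clause (3) because $\sigma''\bar G\subseteq\sigma''\bar G'\cap\mathbb P_{{\le}p}\subseteq G\cap\mathbb P_{{\le}p}=H$; clauses (1) and (4) verbatim.

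The main obstacle is the case $\mathbb P\notin\ran(\sigma)$. To handle it I would enlarge $\sigma$ via L\"owenheim--Skolem to some countable $\sigma^+:\bar N^+\prec N$ with $\ran(\sigma)\cup\{\mathbb P\}\subseteq\ran(\sigma^+)$, lift $\bar G$ along the induced elementary $j:=(\sigma^+)^{-1}\circ\sigma:\bar N\prec\bar N^+$ to a generic $\bar G^+$ over $\bar N^+$ for $j(\bar{\mathbb P}_{{\le}\bar p})$ (a standard countable-model genericity-extension argument, exploiting fullness), run the argument of the previous paragraph with $\sigma^+$ in place of $\sigma$, and post-compose the resulting $\sigma'^+$ with $j$ to obtain $\sigma':\bar N\prec N$. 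The delicate part will be verifying the clause $C^N_\varepsilon(\ran(\sigma'))=C^N_\varepsilon(\ran(\sigma))$ for this composition, given that $\varepsilon$-subcompleteness of $\mathbb P$ only directly delivers $C^N_\varepsilon(\ran(\sigma'^+))=C^N_\varepsilon(\ran(\sigma^+))$; choosing $\sigma^+$ so that its range lies inside $C^N_\varepsilon(\ran(\sigma)\cup\{\mathbb P\})$ should make these two hulls agree and allow the clause to descend to $\sigma'$.
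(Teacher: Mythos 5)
Your first case coincides with the paper's own proof: there too, one passes from $\bar G$ to the $\bar{\mathbb P}$-generic filter it generates, applies the $\varepsilon$-subcompleteness of $\mathbb P$ with $p$ carried along in the parameter, deduces from clauses (2) and (3) that the resulting condition $r$ forces $p$ into the generic and is hence compatible with $p$, and checks that any $r'\le r,p$ works for $\mathbb P_{{\le}p}$; your verification of the primed clauses, including $(\sigma')``\bar G\subseteq G\cap\mathbb P_{{\le}p}=H$, is correct (and your observation that $p\in\ran(\sigma)$ automatically is fine, at least when forcings are partial orders rather than preorders). The difference is how the hypothesis $\mathbb P\in\ran(\sigma)$ is secured: the paper invokes the argument of \cite[p.~116]{MR2840749} to assume $\mathbb P,p\in\ran(\sigma)$ outright, whereas you attempt to manufacture it by hand, and that is where your proposal has a genuine gap.

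Concretely, three steps of your second case fail or are unjustified. First, the ``standard countable-model genericity-extension argument'' you invoke does not exist: given $j:\bar N\prec\bar N^+$ between countable transitive models and $\bar G$ generic over $\bar N$, there is in general no $j(\bar{\mathbb P}_{{\le}\bar p})$-generic filter $\bar G^+$ over $\bar N^+$ with $j``\bar G\subseteq\bar G^+$. For instance, if $\bar{\mathbb P}_{{\le}\bar p}$ is Cohen forcing, any filter containing $j``\bar G$ is determined by the generic real of $\bar G$, so a lift exists only if that real is generic over the larger model $\bar N^+$ --- which fails as soon as the hull you take contains $\bar G$, or merely a dense set diagonalizing against it, and nothing in your constraints rules that out; fullness is of no help with this. (Producing elementary embeddings that match a prescribed generic is precisely what subcompleteness is engineered to deliver, and only inside $\V[G]$, so positing such a transfer in $\V$ comes close to assuming what is being proved.) Second, to apply the $\varepsilon$-subcompleteness of $\mathbb P$ to $\sigma^+$ you need $\bar N^+$ to be countable, transitive and \emph{full}, and the transitive collapse of a L\"owenheim--Skolem hull of $N$ need not be full; you never address this. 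Third, even granting both points, clause (4) does not descend as you hope: your arrangement yields $C^N_\varepsilon(\ran(\sigma'^+))=C^N_\varepsilon(\ran(\sigma^+))=C^N_\varepsilon(\ran(\sigma)\cup\{\mathbb P\})$, and since $\ran(\sigma')\subseteq\ran(\sigma'^+)$, all that follows is $C^N_\varepsilon(\ran(\sigma'))\subseteq C^N_\varepsilon(\ran(\sigma)\cup\{\mathbb P\})$; equality of the large hulls gives no control over the hulls of the images of the smaller model, and the target hull $C^N_\varepsilon(\ran(\sigma))$ need not even contain $\mathbb P$. So the case $\mathbb P\notin\ran(\sigma)$ genuinely requires Jensen's argument (or a reproof of it), not a lifting construction.
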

\begin{proof}
Let $\theta$ verify the $\varepsilon$-subcompleteness of $\mathbb P$. To show that $\mathbb P_{{\le}p}$ is $\varepsilon$-subcomplete, let $N=L_\tau^A$, $\theta<\tau$, $\bar N$ countable and full and $\sigma:\bar N\prec N$ with $\mathbb P_{{\le}p},\theta,\varepsilon\in\ran(\sigma)$.
By the argument of \cite[p.~116]{MR2840749}, we may also assume that $\mathbb P,p\in\ran(\sigma)$. Let $s\in\ran(\sigma)$ be given. Further, let $\bar{s}=\sigma^{-1}(s)$, $\bar{\theta}=\sigma^{-1}(\theta)$, $\bar{\varepsilon}=\sigma^{-1}(\varepsilon)$, $\bar{\mathbb P}=\sigma^{-1}(\mathbb P)$ and $\bar{p}=\sigma^{-1}(p)$, so that $\sigma(\bar{\mathbb P}_{{\le}\bar{p}})=\mathbb P_{{\le}p}$. Finally, let $\bar G$ be $\bar{\mathbb P}_{{\le}\bar{p}}$-generic over $\bar N$. Then $\bar G$ generates a filter $\bar{H}=\{q\in\bar{\mathbb P}\mid\exists r\in\bar G\quad r\le q\}$ that is $\bar{\mathbb P}$-generic over $\bar N$. Since $\theta$ verifies the $\varepsilon$-subcompleteness of $\mathbb P$, there is a condition $r\in\mathbb P$ such that whenever $H$ is $\mathbb P$-generic over $\V$, then in $\V[H]$, there is a $\sigma'$ such that
\begin{enumerate}
  \item $\sigma':\bar N\prec N$,
  \item $\sigma'(\bar{s},\bar{\theta},\bar{\mathbb P},\bar{p})=\kla{s,\theta,\mathbb P,p}$,
  \item $(\sigma')``\bar{H}\subseteq H$,
  \item $C^N_\varepsilon(\ran(\sigma'))=C^N_\varepsilon(\ran(\sigma))$.
\end{enumerate}
Note that it follows from (2) and (3) that $p\in H$. In particular, $r$ is compatible with $p$. Letting $r'\le p,r$, it follows that $r'\in\mathbb P_{{\le}p}$, and whenever $G$ is $\mathbb P_{{\le}p}$-generic over $\V$ with $r'\in G$, then there is a $\sigma'$ in $\V[G]$ such that
\begin{enumerate}
  \item[(1')] $\sigma':\bar N\prec N$,
  \item[(2')] $\sigma'(\bar{s},\bar{\theta},\bar{\mathbb P}_{{\le}\bar{p}})=\kla{s,\theta,\mathbb P_{{\le}p}}$,
  \item[(3')] $(\sigma')``\bar G\subseteq G$,
  \item[(4')] $C^N_\varepsilon(\ran(\sigma'))=C^N_\varepsilon(\ran(\sigma))$.
\end{enumerate}
This is because if $G$ is as described, then it generates an $H$ which is $\mathbb P$-generic, and since $r'\in G$, it follows that $r\in H$. Thus, there is a $\sigma'\in\V[H]$ that satisfies (1)-(4). But $\V[H]=\V[G]$, and it follows that $\sigma'$ satisfies (1')-(4').
\end{proof}

\begin{cor}\label{cor:RestrictionsOfSCforcingAreDelta(P)SC} If $\mathbb P$ is subcomplete and $p\in\mathbb P$, then $\mathbb P_{{\le}p}$ is $\delta(\mathbb P)$-subcomplete.
\end{cor}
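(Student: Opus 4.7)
The corollary is essentially immediate from the preceding lemma, once one unpacks the definitions.

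By definition of subcompleteness, $\mathbb P$ being subcomplete means that $\mathbb P$ is $\delta(\mathbb P)$-subcomplete. So the plan is simply to invoke the preceding lemma with $\varepsilon := \delta(\mathbb P)$: given $p \in \mathbb P$, the lemma asserts that since $\mathbb P$ is $\varepsilon$-subcomplete, so is $\mathbb P_{\le p}$. This yields exactly that $\mathbb P_{\le p}$ is $\delta(\mathbb P)$-subcomplete, which is the statement of the corollary.

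The only subtlety worth flagging is that the conclusion is phrased in terms of $\delta(\mathbb P)$ rather than $\delta(\mathbb P_{\le p})$. These two cardinals need not coincide in general, but as noted in the discussion preceding the definition of essential subcompleteness, increasing the parameter $\varepsilon$ only weakens the $\varepsilon$-subcompleteness requirement. Since clearly $\delta(\mathbb P_{\le p}) \le \delta(\mathbb P)$, we are free to formulate the conclusion at the larger parameter $\delta(\mathbb P)$, and there is nothing further to check. Hence the corollary follows with no additional work beyond citing the lemma.
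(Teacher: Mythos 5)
Your proposal is correct and matches the paper's (implicit) argument exactly: since subcompleteness of $\mathbb P$ means $\delta(\mathbb P)$-subcompleteness by definition, the corollary is an immediate instance of the preceding lemma with $\varepsilon=\delta(\mathbb P)$, and your observation that $\delta(\mathbb P_{\le p})$ may differ from $\delta(\mathbb P)$ is precisely the caveat the paper itself records right after the corollary. No gaps.
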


Note that in the situation of this corollary, since $\delta(\mathbb P_{{\le}p})$ may be smaller than $\delta(\mathbb P)$, we do not necessarily know that $\mathbb P_{{\le}p}$ is subcomplete. We are now ready to prove the requisite preservation property of \SCFA.

\begin{lemma}\label{lem:SCFApreservedByDirectedClosedForcing} \SCFA is preserved by ${<}\omega_2$-directed closed forcing.
\end{lemma}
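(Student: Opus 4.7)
My plan is to mimic Larson's argument that Martin's Maximum is preserved under $<\omega_2$-directed closed forcing, with subcomplete forcing playing the role of stationary-preserving forcing. Assume \SCFA in $V$ and let $\mathbb{R}$ be a $<\omega_2$-directed closed notion of forcing. By the usual density/maximality reduction, it suffices to fix a condition $r_0 \in \mathbb{R}$, an $\mathbb{R}$-name $\dot{\mathbb{P}}$ with $r_0 \Vdash \dot{\mathbb{P}}$ subcomplete, and a sequence $\langle \dot{D}_\alpha \mid \alpha<\omega_1\rangle$ of names with $r_0 \Vdash \dot{D}_\alpha$ dense in $\dot{\mathbb{P}}$, and to find $r^* \le r_0$ forcing the existence of a filter on $\dot{\mathbb{P}}$ meeting every $\dot{D}_\alpha$.

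The central construction is the two-step iteration $\mathbb{Q} := \mathbb{R}_{\le r_0} * \dot{\mathbb{P}}$, which I first want to be (essentially) subcomplete in $V$. The restriction $\mathbb{R}_{\le r_0}$ is again $<\omega_2$-directed closed, hence countably closed, and so is subcomplete by Jensen's theorem from \cite{Jensen:SPSCF,MR2840749}; relative to $\mathbb{R}_{\le r_0}$, the new maximum condition $r_0$ forces $\dot{\mathbb{P}}$ to be subcomplete; and Jensen's preservation of subcompleteness under two-step iteration then yields that $\mathbb{Q}$ is subcomplete. Corollary~\ref{cor:RestrictionsOfSCforcingAreDelta(P)SC} plays a supporting role here, guaranteeing that the passage to $\mathbb{R}_{\le r_0}$ (and any subsequent restriction of $\dot{\mathbb{P}}$ below a condition) remains within the essentially subcomplete class to which \SCFA ultimately applies, even when the density number shrinks. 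For each $\alpha<\omega_1$, the set $E_\alpha := \{(r,\dot{p})\in\mathbb{Q} \mid r \Vdash \dot{p}\in\dot{D}_\alpha\}$ is dense in $\mathbb{Q}$ by a routine extension argument.

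Applying \SCFA in $V$ to $\mathbb{Q}$ with the $\omega_1$ dense sets $\{E_\alpha : \alpha<\omega_1\}$ produces a filter $F\subseteq\mathbb{Q}$ meeting every $E_\alpha$. From $F$ I extract a directed subset $F_0 \subseteq F$ of cardinality at most $\omega_1$, consisting of one witness $(r_\alpha,\dot{p}_\alpha) \in F \cap E_\alpha$ for each $\alpha$ together with, for each finite subfamily, a common lower bound drawn from $F$. The projection of $F_0$ to $\mathbb{R}_{\le r_0}$ is then directed of cardinality at most $\omega_1$, so $<\omega_2$-directed closedness of $\mathbb{R}$ supplies a lower bound $r^* \le r_0$. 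For any $\mathbb{R}$-generic $G$ containing $r^*$, every condition in that projection lies in $G$, and so the evaluations $\dot{p}^G$ for $(r,\dot{p})\in F_0$ form a directed family of conditions in $\dot{\mathbb{P}}^G$ meeting each $\dot{D}_\alpha^G$; its upward closure is the desired filter in $V[G]$.

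The main obstacle I anticipate is establishing that $\mathbb{Q}$ is (essentially) subcomplete. This rests on Jensen's preservation theorems for subcomplete forcing and is delicate because $r_0$ need not be the weakest condition of $\mathbb{R}$, forcing us to work with the restricted iteration $\mathbb{R}_{\le r_0} * \dot{\mathbb{P}}$ rather than $\mathbb{R} * \dot{\mathbb{P}}$; Corollary~\ref{cor:RestrictionsOfSCforcingAreDelta(P)SC} is precisely the bookkeeping device that legitimates this passage within the \SCFA framework. Once this is secured, extracting $F_0$, lower-bounding its projection, and verifying that $r^*$ forces the filter are all straightforward.
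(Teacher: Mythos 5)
Your proposal is correct, and its overall architecture coincides with the paper's: form the restricted two-step iteration $\mathbb{R}_{{\le}r_0}*\dot{\mathbb{P}}'$, apply \SCFA in \V{} to the dense sets $E_\alpha$, thin the resulting filter to a directed subfamily of size at most $\omega_1$, and use ${<}\omega_2$-directed closure to find a master condition $r^*$ below the first coordinates which forces the second coordinates to generate the desired generic filter (the paper packages this as a contradiction from a condition $p$ forcing that no such filter exists, an equivalent framing). Where you genuinely diverge is in the justification that the iteration is subcomplete, and here your route is actually simpler. The paper obtains this from its restriction lemma: Corollary~\ref{cor:RestrictionsOfSCforcingAreDelta(P)SC} only yields that $\mathbb{P}_{{\le}p}$ is $\delta(\mathbb{P})$-subcomplete, and since $\delta(\mathbb{P}_{{\le}p})$ may be smaller than $\delta(\mathbb{P})$, this is merely \emph{essential} subcompleteness; the paper must therefore route through the parametric machinery of \cite{Fuchs:ParametricSubcompleteness} (preservation of essential subcompleteness under two-step iterations, and the fact that \SCFA implies the forcing axiom for essentially subcomplete forcing). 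You instead observe that $\mathbb{R}_{{\le}r_0}$ is itself ${<}\omega_2$-directed closed, in particular countably closed, hence outright subcomplete by Jensen's theorem \cite{Jensen:SPSCF,MR2840749}, so Jensen's original two-step iteration theorem gives full subcompleteness of $\mathbb{Q}$ and \SCFA applies with no detour. This exploits the special feature of the present situation---the outer forcing is countably closed---and makes Corollary~\ref{cor:RestrictionsOfSCforcingAreDelta(P)SC} redundant; your remark that the corollary is ``precisely the bookkeeping device that legitimates this passage'' describes the paper's route, not the one your own argument needs. Two steps you gloss over are routine but should be acknowledged: $\dot{\mathbb{P}}$ and the $\dot{D}_\alpha$ are $\mathbb{R}$-names and must be translated into $\mathbb{R}_{{\le}r_0}$-names so that the top condition of $\mathbb{R}_{{\le}r_0}$ forces the hypotheses (the paper spells out this translation $\tau\mapsto\tau'$), and the extraction of your directed $F_0$ requires an $\omega$-stage closing-off under lower bounds taken inside the filter $F$, not just lower bounds for finite subfamilies of the initial witnesses.
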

\begin{proof}
The basic idea of the proof of \cite[Theorem 4.3]{MR1782117} goes through, but matters are slightly complicated by dealing with subcomplete forcing.

Let $\mathbb P$ be ${<}\omega_2$-directed closed, and assume that \SCFA holds. Let $G$ be $\mathbb P$-generic over $\V$, and in $\V[G]$, let $\mathbb Q$ be a subcomplete forcing, and let $\vec{\Delta}=\seq{\Delta_\xi}{\xi<\omega_1}$ be a sequence of dense subsets of $\mathbb Q$. We have to show that there is a $\vec{\Delta}$-generic filter for $\mathbb Q$ in $\V[G]$.

Assume, towards a contradiction, that there is no filter in $\V[G]$ which meets the sets in the sequence $\vec{\Delta}$, and let $\dot{\Delta}$ be a $\mathbb P$-name for $\vec{\Delta}$. Let $p\in\mathbb P$ be a condition which forces that $\vec{\Delta}$ has the properties described above, and that there is no $\dot{\Delta}$-generic filter. By Corollary~\ref{cor:RestrictionsOfSCforcingAreDelta(P)SC},
$\mathbb P_{{\le}p}$ is $\delta(\mathbb P)$-subcomplete, and in particular, it is essentially subcomplete.

If $H\subseteq\mathbb P_{{\le}p}$ is generic for $\mathbb P_{{\le}p}$, then let us write $H'$ for the filter over $\mathbb P$ generated by $H$, that is, $H'=\{r\in\mathbb P\mid\exists q\in H\quad r\ge q\}$. It follows that $H'$ is $\mathbb P$-generic, because if $D\subseteq\mathbb P$ is dense, then $D\cap\mathbb P_{{\le}p}$ is dense in $\mathbb P_{{\le}p}$. Moreover, if $I\subseteq\mathbb P$ is a $\mathbb P$-generic filter containing $p$, then $H:=I\cap\mathbb P_{{\le}p}$ is a generic filter over $\mathbb P_{{\le}p}$, and $I=H'$. There is a simple recursive procedure to translate any $\mathbb P$-name $\tau$ to a $\mathbb P_{{\le}p}$-name $\tau'$ in such a way that whenever $I$ and $H$ are as just described, it follows that $\tau^I=(\tau')^H$. This can be achieved by the recursive definition \[\tau'=\{\kla{\sigma',q}\mid q\le p\ \land\ \exists r\ge q\quad\kla{\sigma,r}\in\tau\}.\]
It is then clear that for every formula $\varphi(\tau_0,\ldots,\tau_{n-1})$ of the forcing language for $\mathbb P$, and for every $q\le p$, we have that
\[q\Vdash_{\mathbb P}\varphi(\tau_0,\ldots,\tau_{n-1})\iff
q\Vdash_{\mathbb P_{{\le}p}}\phi(\tau'_0,\ldots,\tau'_{n-1}).\]
To see this, from left to right, assume that $H$ is a $\mathbb P_{{\le}p}$-generic filter with $q\in H$. We know that then, $I=H'$ is $\mathbb P$-generic and $q\in H'$, and so, by assumption, in $\V[H']$, $\phi(\tau_0^{H'},\ldots,\tau_{n-1}^{H'})$ holds. Moreover, $H=I\cap\mathbb P_{{\le}p}$, and so, we know that $\V[H']=\V[H]$ and $\tau_i^{H'}=(\tau'_i)^H$, for $i<n$, and hence, $\V[H]\models\phi((\tau'_0)^H,\ldots,(\tau'_{n-1})^H)$, which means that $q\Vdash_{\mathbb P_{{\le}p}}\phi(\tau'_0,\ldots,\tau'_{n-1})$. The converse follows similarly.

Thus, we can replace the $\mathbb P$-names $\dot{\Delta}$ and $\dot{\mathbb Q}$ with the translated $\mathbb P_{{\le}p}$-names $\dot{\Delta}'$ and $\dot{\mathbb Q}'$, and we then know that $p=1_{\mathbb P_{{\le}p}}$ forces with respect to $\mathbb P_{{\le}p}$ that $\dot{\mathbb Q}'$ is a subcomplete forcing notion, $\dot{\Delta}'$ is an $\omega_1$-sequence of dense subsets of $\dot{\mathbb Q}'$ and there is no $\dot{\Delta}'$-generic filter.

For $\zeta<\omega_1$, define a dense subset $D_\zeta\subseteq\mathbb P_{{\le}p}*\dot{\mathbb Q}'$ by setting
\[D_\zeta=\{\kla{q,\sigma}\in\mathbb P_{{\le}p}*\dot{\mathbb Q}'\mid p\Vdash_{\mathbb P_{{\le}p}}\sigma\in\dot{\Delta}'_{\check{\zeta}}\}.\]
Since $\mathbb P_{{\le}p}$ is essentially subcomplete and $\mathbb P_{{\le}p}$ forces that $\dot{\mathbb Q}'$ is (essentially) subcomplete, it follows that $\mathbb P_{{\le}p}*\dot{\mathbb Q}'$ is essentially subcomplete (by \cite[Theorem 2.9]{Fuchs:ParametricSubcompleteness}). By \cite[Lemma 2.6]{Fuchs:ParametricSubcompleteness}, \SCFA implies the forcing axiom for essentially subcomplete forcing notions, and so, there is in $\V$ a filter $F$ in $\mathbb P_{{\le}p}*\dot{\mathbb Q}'$ that meets $D_\zeta$, for every $\zeta<\omega_1$. It is now straightforward to construct a subset $\bar F\subseteq F$ such that
\begin{enumerate}
    \item                                   the cardinality of $\bar F$ is at most $\omega_1$,
    \item                                   for every $\zeta<\omega_1$, $\bar F\cap D_\zeta\neq\emptyset$,
    \item \label{item:ClosedUnderWitnesses} for all $s,t\in\bar F$, there is a $u\in\bar F$ such that $u\le s,t$.
\end{enumerate}
The collection of the first coordinates of conditions in $\bar F$ is a directed subset of $\mathbb P_{{\le}p}$ of size at most $\omega_1$, so by ${<}\omega_2$-directedness, we may choose a condition $q$ such that for every $\kla{r,\sigma}\in\bar F$, we have that $q\le r$. Letting $\tilde{F}=\{\kla{\sigma,r}\mid\kla{r,\sigma}\in F\}$, we claim now that $q$ forces with respect to $\mathbb P_{{\le}p}$ that $\tilde{F}$ generates a $\dot{\Delta}'$-generic filter over $\dot{\mathbb Q}'$.

To see this, let $H$ be $\mathbb P_{{\le}p}$-generic with $q\in H$. First, let's check that $\tilde{F}^H$ generates filter. Let $a,b\in\tilde{F}^H$. It suffices to show that there is a $c\in\tilde{F}^H$ with $c\le a,b$. Let $\kla{\sigma_1,r_1},\kla{\sigma_2,r_2}\in\tilde{F}$ be such that $a=\sigma_1^H$, $b=\sigma_2^H$ and $r_1,r_2\in H$. By (\ref{item:ClosedUnderWitnesses}), there is a condition $\kla{r_3,\sigma_3}\in\bar F$ with $\kla{r_3,\sigma_3}\le\kla{r_1,\sigma_1},\kla{r_2,\sigma_2}$. This means that $r_3\le r_1,r_2$ and $r_3$ forces that $\sigma_3\le\sigma_1,\sigma_2$. Since we made sure that $q\le r_1,r_2,r_3$ and $q\in H$, it follows that $c=\sigma_3^H\le b,c$ and $c\in\tilde{F}^H$. Finally, if $\xi<\omega_1$, then since $\bar F\cap D_\xi\neq\emptyset$, there is a $\kla{r,\sigma}\in\bar F$ with $r\Vdash\sigma\in\dot{\Delta}_{\check{\xi}}$, so that, since $\kla{\sigma,r}\in\tilde{F}$, $q\le r$ and $q\in H$, we get that $\sigma^H\in\tilde{F}^H\cap(\dot{\Delta}')^H_\xi$.

This contradicts our assumption that $p$ forces that there is no $\dot{\Delta}'$-generic filter over $\dot{\mathbb Q}'$.
\end{proof}

As a result, we arrive at the following complete description of the effects of \SCFA on weak square principles, in particular,
answering Question~2.12 of \cite{Fuchs:HierarchiesOfForcingAxioms}.

\begin{cor}\label{cor313} Assume \SCFA holds, and let $\lambda$ be an uncountable cardinal.
\begin{enumerate}
  \item If $\cf(\lambda)=\omega$, then $\square^*_\lambda$ fails.
  \item If $\cf(\lambda)=\omega_1$, then $\square_{\lambda,\mu}$ fails for every $\mu<\lambda$.
  \item If $\cf(\lambda)\ge\omega_2$, then $\square_{\lambda,\mu}$ fails for every $\mu<\cf(\lambda)$.
\end{enumerate}
\end{cor}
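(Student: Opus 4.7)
The plan is to derive all three clauses from \SCFA by leveraging Theorem~\ref{thm33}, whose hypotheses can be verified using Lemma~\ref{lem:SCFApreservedByDirectedClosedForcing} together with Jensen's already-cited fact that \SCFA implies $\refl(\omega_1, E^\mu_\omega)$ for every regular cardinal $\mu > \omega_1$.

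For clauses~(2) and~(3), no forcing-extension detour is needed: these are immediate from Fact~\ref{squareNreflection} applied in $\V$ itself. Specifically, if $\cf(\lambda)=\omega_1$ and $\mu<\lambda$, then $\square_{\lambda,\mu}$ would by Fact~\ref{squareNreflection}(1) contradict $\refl(\omega_1, E^{\lambda^+}_\omega)$; and if $\cf(\lambda)\ge\omega_2$ and $\mu<\cf(\lambda)$, then $\square_{\lambda,\mu}$ would by Fact~\ref{squareNreflection}(2) contradict $\refl(1, E^{\lambda^+}_\omega)$, which is weaker than \SCFA's full reflection.

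Clause~(1), the case $\cf(\lambda)=\omega$, is the substantive case; I would apply Theorem~\ref{thm33}(1). Its directed-closed preservation hypothesis is verified as follows: any ${<}\aleph_{\omega+1}$-directed closed forcing $\mathbb Q$ is in particular ${<}\omega_2$-directed closed, so Lemma~\ref{lem:SCFApreservedByDirectedClosedForcing} gives $\V^{\mathbb Q}\models\SCFA$, and then Jensen's reflection consequence of \SCFA, applied inside $\V^{\mathbb Q}$, yields $\refl(\omega_1, E^\mu_\omega)$ for every regular $\mu > \omega_1$ in that extension.

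The main obstacle I anticipate is verifying the arithmetic hypothesis $2^{\aleph_0}<\aleph_\omega$ of Theorem~\ref{thm33}. Subcomplete forcing adds no reals, and \SCFA is even consistent with $\ch$, so the continuum is not pinned down by the axiom itself; the task is to isolate the right consequence of \SCFA delivering the bound, for instance by combining the stationary reflection at $\omega_2$ implied by \SCFA with a classical upper bound on $2^{\aleph_0}$ extracted from reflection, or by appealing to a known direct consequence of \SCFA bounding the continuum. Once this is in place, Theorem~\ref{thm33}(1) applies uniformly to every singular $\lambda$ of countable cofinality, completing clause~(1).
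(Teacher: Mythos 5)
Your overall architecture is exactly the paper's: clause~(1) via Theorem~\ref{thm33}, whose preservation hypothesis you verify, as the paper does, by combining Lemma~\ref{lem:SCFApreservedByDirectedClosedForcing} (any ${<}\aleph_{\omega+1}$-directed closed forcing is in particular ${<}\omega_2$-directed closed) with Jensen's theorem that \SCFA implies $\refl(\omega_1,E^\mu_\omega)$ for every regular $\mu>\omega_1$; and clauses~(2),(3) directly from Fact~\ref{squareNreflection} in $\V$, which is literally how the paper proves parts (2) and (3) of Theorem~\ref{thm33} (no forcing is used there), so bypassing the theorem for those clauses changes nothing.

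The one genuine gap is the arithmetic hypothesis $2^{\aleph_0}<\aleph_\omega$, which you explicitly leave open. The paper closes it with a specific citation: by Jensen (\cite{Jensen:FAandCH}, \cite{MR2840749}), \SCFA implies $2^{\aleph_1}=\aleph_2$, whence $2^{\aleph_0}\le 2^{\aleph_1}=\aleph_2<\aleph_\omega$. (The paper even notes in a footnote that \SCFA entails $\diamondsuit(E^{\omega_2}_{\omega_1})$.) Your proposed fallback --- extracting an upper bound on the continuum from the stationary reflection at $\omega_2$ that \SCFA provides --- should not be expected to work: reflection of stationary sets of \emph{ordinals} of the form $\refl(\omega_1,E^\mu_\omega)$ does not bound $2^{\aleph_0}$; the known arguments bounding the continuum by $\aleph_2$ use reflection for stationary subsets of $[\mu]^\omega$ (Todor\v{c}evi\'c-style), which is a strictly stronger kind of hypothesis and is not among the consequences of \SCFA established here. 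So the missing ingredient is not a routine verification but a substantive theorem of Jensen about \SCFA; without it (or some equally concrete consequence of \SCFA bounding the continuum), clause~(1) does not go through as written, since Theorem~\ref{thm33} genuinely needs $2^{\aleph_0}<\aleph_\omega$ to choose $\kappa:=2^{\aleph_0}$ below $\lambda\ge\aleph_\omega$ in its appeal to Corollary~\ref{cor26}.
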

\begin{proof} By \cite{Jensen:FAandCH}, \cite{MR2840749}, \SCFA implies that $2^{\aleph_1}=\aleph_2$,\footnote{In fact, one can show that \SCFA entails $\diamondsuit(E^{\omega_2}_{\omega_1})$.} so that $2^{\aleph_0}<\aleph_\omega$.
As pointed out earlier, \SCFA implies $\refl(\omega_1,E^\mu_\omega)$ for every regular cardinal $\mu>\omega_1$.
In particular, by Lemma~\ref{lem:SCFApreservedByDirectedClosedForcing},
for every ${<}\aleph_{\omega+1}$-directed closed notion of forcing $\mathbb Q$,
$\V^{\mathbb Q}\models\refl(\omega_1,E^\mu_\omega)$ for every regular cardinal $\mu>\omega_1$.
Now, appeal to Theorem~\ref{thm33}.
\end{proof}

Thus, we have proved Corollary~B.
Note that Corollary~\ref{cor313} is optimal in the following sense (see the discussion before Question 2.12 in \cite{Fuchs:HierarchiesOfForcingAxioms}).

\begin{fact}[\cite{Fuchs:HierarchiesOfForcingAxioms}] It is consistent that \SCFA holds and for every uncountable cardinal $\lambda$, the following hold:
\begin{enumerate}
  \item \label{item:CofOmega_1} If $\cf(\lambda)=\omega_1$, then $\square^*_\lambda$ holds.
  \item                         If $\cf(\lambda)\ge\omega_2$, then $\square_{\lambda,\cf(\lambda)}$ holds.
\end{enumerate}
\end{fact}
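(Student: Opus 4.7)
The plan is to construct a single model of \SCFA in which the indicated square sequences persist, by starting from a ground model rich in such sequences and running an iteration that respects them. I would begin with a ground model $V_0$ satisfying GCH and containing a supercompact cardinal $\kappa$, in which $\square_{\lambda,\cf(\lambda)}$ holds for every uncountable $\lambda$ of cofinality at least $\omega_2$ and $\square^*_\lambda$ holds for every $\lambda$ of cofinality $\omega_1$ (these square principles are readily available in $L$-like settings, and a preparatory ${<}\omega_2$-closed iteration can be used to arrange them together with supercompactness). Following Jensen's construction from \cite{Jensen:FAandCH,MR2840749}, I would then form the standard revised countable support iteration $\mathbb{P}_\kappa$ of length $\kappa$, whose iterands, listed by a Laver-style bookkeeping function, are all the subcomplete forcings that need to be handled. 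In $V_0[\mathbb{P}_\kappa]$ one has \SCFA, and $\kappa=\omega_2$.

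The bulk of the work is to verify that the square sequences of $V_0$ remain witnesses in $V_0[\mathbb{P}_\kappa]$. The task splits naturally at $\kappa$. For $\lambda\ge\kappa$, the iteration is small compared with $\lambda^+$: every $\mathbb{P}_\kappa$-name for a club in $\lambda^+$ is captured at some stage $\alpha<\kappa$, and the tail $\mathbb{P}_\kappa/\mathbb{P}_\alpha$ is subcomplete and so adds no reals, which suffices to preserve both the coherence demanded by $\square_{\lambda,\cf(\lambda)}$ and the thinness demanded by $\square^*_\lambda$. For $\lambda<\kappa$, one instead proceeds stage by stage: each iterand is subcomplete, and at limit stages the revised countable support iteration provides enough closure that a preservation lemma for coherent square sequences, adapted to the subcomplete setting, can be applied to each of the two types of sequence in turn.

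The main obstacle is this stage-by-stage preservation for $\lambda<\kappa$: individual subcomplete forcings need not be ${<}\omega_2$-directed closed, so Lemma~\ref{lem:SCFApreservedByDirectedClosedForcing} gives no immediate shortcut, and one cannot simply invoke a single directed-closed forcing to add all the required square sequences after forcing \SCFA. Instead, one has to exploit the structural features particular to subcomplete forcing — above all the fact that it adds no reals and iterates well under revised countable support — to check the hypotheses of the relevant square-preservation theorem at every iteration stage and through all limits; the nontrivial cases are $\square^*_\lambda$ for $\lambda$ of cofinality $\omega_1$ below $\kappa$, where the thinness bound $\lambda$ is tight and it must be ruled out that a subcomplete iterand enlarges some $\mathcal{C}_\gamma$ beyond $\lambda$-many traces.
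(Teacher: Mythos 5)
There is a genuine gap, and it lies in your order of operations. The paper's route (following Cummings--Magidor's proof of Fact~\ref{fact32}, which is what the cited discussion in \cite{Fuchs:HierarchiesOfForcingAxioms} adapts) is the opposite of yours: one starts with a model of $\SCFA+\ch$ and forces the square sequences \emph{afterwards}, using the Cummings--Magidor forcings that add a $\square^*_\lambda$-sequence (for $\cf(\lambda)=\omega_1$) resp.\ a $\square_{\lambda,\cf(\lambda)}$-sequence (for $\cf(\lambda)\ge\omega_2$) by initial segments; these forcings are ${<}\omega_2$-directed closed, so \SCFA survives by exactly the preservation theorem in the style of Lemma~\ref{lem:SCFApreservedByDirectedClosedForcing}, and $\ch$ yields $\square^*_{\omega_1}$ for the case $\lambda=\omega_1$. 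Your stated reason for rejecting this route --- that ``individual subcomplete forcings need not be ${<}\omega_2$-directed closed'' --- is a non sequitur: the forcing applied after \SCFA is the square-adding forcing, whose subcompleteness is irrelevant; only its directed closure matters. Your alternative, forcing the squares \emph{before} Jensen's iteration, founders at the preparation step: a ``preparatory ${<}\omega_2$-closed iteration'' does not preserve the supercompactness of $\kappa\gg\omega_2$ (Laver indestructibility requires ${<}\kappa$-directed closure), and the square-adding forcings with thinness parameter $\mu=\cf(\lambda)\in[\omega_2,\kappa)$ are not ${<}\kappa$-directed closed --- the whole reason Cummings--Magidor force the squares only after the axiom, when the supercompact has become $\omega_2$ and ${<}\omega_2$-directed closure suffices. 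You give no argument that the required squares above $\kappa$ can coexist with a supercompact at all, and without them in the ground model the $\kappa$-c.c.\ iteration certainly does not create them.

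Two further points. First, the ``bulk of the work'' you locate at $\lambda<\kappa$ is misconceived: Jensen's iteration collapses every cardinal in $(\omega_1,\kappa)$ and makes $\kappa=\omega_2$, so in the final model there simply are no uncountable cardinals strictly between $\omega_1$ and $\kappa$ at which squares are needed, and ``preserving $\square^*_\lambda$'' for a collapsed $\lambda$ is meaningless; your stage-by-stage preservation program through subcomplete iterands (which do change cofinalities in that interval) addresses a nonexistent task, while the genuinely needed case $\lambda=\omega_1$, where clause~(1) amounts to a special $\omega_2$-Aronszajn tree and follows from $\ch$, goes unmentioned. Second, for $\lambda\ge\kappa$ your argument via ``names for clubs captured at some stage'' and ``the tail adds no reals'' obscures what is actually a triviality: a $\square_{\lambda,\mu}$-sequence is a single object whose defining clauses (each $C_\delta$ club of order type $\le\lambda$; at most $\mu$ traces at each $\gamma$) are absolute to any extension preserving cardinals up to $\lambda^+$, so $\kappa$-c.c.\ of the iteration settles this half immediately --- but also note that this very absoluteness forces a consistency check you never perform: the ground model must \emph{fail} $\square^*_\lambda$ at every $\lambda\ge\kappa$ of countable cofinality, since otherwise the surviving sequence would contradict $\SCFA$ in the extension by Corollary~B.
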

\begin{remark}
The preceding is witnessed by a model of $\SCFA+\ch$.
Thus, unlike Fact~\ref{fact32}, Clause~(\ref{item:CofOmega_1}) of the preceding does apply to $\lambda=\omega_1$ (since $\ch\implies\square^*_{\omega_1}$.)
\end{remark}

\end{document}